\documentclass[11pt,english]{article}
\usepackage[utf8]{inputenc}
\usepackage[T1]{fontenc}
\usepackage{babel}
 \usepackage{amsmath}
 \usepackage{mathtools}
\usepackage{amsfonts}
\usepackage{amssymb}
\usepackage{graphicx}
\usepackage{stmaryrd}
\usepackage{amsthm}
\usepackage{bbm}
\usepackage{mathtools}
\usepackage[colorlinks=true,
            linkcolor=blue,
            urlcolor=blue,
            citecolor=blue]{hyperref}
\usepackage{authblk}
\usepackage{geometry}
\usepackage{algorithm}
\usepackage[noend]{algpseudocode}
\usepackage[vcentermath]{youngtab}
\usepackage{pgfplots}
\pgfplotsset{compat=newest}
\usetikzlibrary{calc}
\usepackage{mathrsfs}
\geometry{hmargin=2cm,vmargin=3cm}

\newtheorem{theorem}{Theorem}[section]%
\newtheorem{assumption}[theorem]{Assumption}%
\newtheorem{corollary}[theorem]{Corollary}%
\newtheorem{definition}[theorem]{Definition}%
\newtheorem{lemma}[theorem]{Lemma}%
\newtheorem{proposition}[theorem]{Proposition}%

\usetikzlibrary{automata, positioning}
\usepackage[normalem]{ ulem }

\newcommand{\bigslant}[2]{{\raisebox{.2em}{$#1$}\left/\raisebox{-.2em}{$#2$}\right.}}
\definecolor {processblue}{cmyk}{0.96,0,0,0}
 
\title{Small cycle structure for words in conjugation invariant random permutations}
\author{
Mohamed Slim Kammoun\footnote{Université de  Toulouse - Institut de Mathématiques de Toulouse, France and Université de Lyon, ENSL, CNRS, France.
Email: \href{mailto:slim.kammoun@ens-lyon.fr}{\nolinkurl{slim.kammoun@ens-lyon.fr}}
}
\quad
Myl\`ene Ma\"ida\footnote{
 Univ. Lille, CNRS, UMR 8524 - Laboratoire Paul Painlevé, F-59000 Lille, France and IRL CRM-CNRS, International Reserach Lab, Montréal, Canada.
 Email: \href{mailto:mylene.maida@univ-lille.fr}{\nolinkurl{mylene.maida@univ-lille.fr}}. 
}
}\date{\today}
\newcommand\mps[1]{}
\bibliographystyle{apalike}

\begin{document}

\maketitle

\begin{abstract}
We study the cycle structure of words in several random permutations. We assume that the permutations are independent and that their distribution is conjugation invariant, with a good control on their short cycles. If,
after successive cyclic simplifications,  the word $w$ still contains at least two different letters, then  we get a universal limiting joint law for short cycles for the word in these permutations. These results can be seen as an extension of our previous work \cite{KaMa20} from the product of permutations to any non-trivial word in the permutations and also as an extension of the results of \cite{Nica} from uniform permutations to general conjugation invariant random permutations. In particular, we get optimal assumptions in the case of the commutator of two such random permutations.
\end{abstract}

\section{Introduction and statement of the results}


The object of this paper is to study the behavior of the cycles up to a fixed size for a word in several independent conjugation invariant random permutations.

We start by recalling that the asymptotics of the  short cycle structure of one  permutation chosen uniformly in the symmetric group is well known and Poisson.
If we denote by $\mathfrak{S}_{n}$ the symmetric group of size $n \in \mathbb N^*,$ and  by $\#_\ell \sigma$  the number of cycles of length $\ell$ of the permutation $\sigma,$ we have :
\begin{theorem}\cite[Theorem 3.1]{ABT00}
For $\rho_n$ following the uniform distribution on  $\mathfrak{S}_{n},$  for any $d' \ge 1,$ 
 $$ (\#_1 \rho_{n},\#_2 \rho_{n},\ldots,\#_{d'} \rho_{n}) \xrightarrow[n \rightarrow \infty]{(d)} (\xi_1, \ldots, \xi_{d'}),$$ 
where $\xrightarrow[n \rightarrow \infty]{(d)}$ is the convergence in distribution and $(\xi_1, \ldots, \xi_{d'})$ are independent variables, where the distribution of $\xi_\ell$ is Poisson with parameter $\frac{1}{\ell}.$

Moreover, the joint moments converge and in particular,
$$   \mathbb E\left(\#_1 \rho_{n}\right) \xrightarrow[n \rightarrow \infty]{} 1.$$ 
\end{theorem}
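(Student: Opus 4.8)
The plan is to prove the whole statement at once by the method of factorial moments, which yields both the convergence in distribution and the convergence of the moments. The starting point is to write each cycle count as a sum of indicators over potential cycles: for every $\ell$-cycle $c$ on the ground set $\{1,\ldots,n\}$ (of which there are $\binom{n}{\ell}(\ell-1)!$), let $\mathbb{1}_c$ denote the event that $c$ is one of the cycles of $\rho_n$, so that $\#_\ell \rho_n = \sum_c \mathbb{1}_c$. The crucial combinatorial remark is that the descending factorial $(\#_\ell \rho_n)_{k}=\#_\ell \rho_n(\#_\ell \rho_n-1)\cdots(\#_\ell \rho_n-k+1)$ counts the ordered $k$-tuples of \emph{distinct} $\ell$-cycles of $\rho_n$; since distinct cycles of a permutation are automatically disjoint, each such tuple occupies $\ell k$ distinct points. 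More generally, the joint descending factorial $\prod_{\ell=1}^{d'}(\#_\ell \rho_n)_{k_\ell}$ counts ordered families of pairwise disjoint cycles with prescribed multiplicities $k_1,\ldots,k_{d'}$ of the lengths $1,\ldots,d'$.

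I would then take expectations. By uniformity, the probability that a \emph{fixed} family of disjoint cycles occupying $K:=\sum_\ell \ell k_\ell$ points is contained in $\rho_n$ equals $\frac{(n-K)!}{n!}$, depending only on $K$. It remains to count the admissible families: building the cycles one after another on the successively remaining points and telescoping the resulting product of factorials shows that there are exactly $\frac{n!}{(n-K)!}\prod_\ell \ell^{-k_\ell}$ of them as soon as $n\ge K$. Multiplying the count by the common probability produces an \emph{exact} identity, valid for every $n\ge K$,
$$\mathbb{E}\Big[\prod_{\ell=1}^{d'}(\#_\ell \rho_n)_{k_\ell}\Big]=\prod_{\ell=1}^{d'}\left(\frac{1}{\ell}\right)^{k_\ell}.$$
In particular $\mathbb{E}[\#_\ell \rho_n]=1/\ell$ for all $n\ge \ell$, which settles the last assertion: $\mathbb{E}[\#_1\rho_n]$ is in fact equal to $1$ for every $n\ge 1$, so it trivially tends to $1$.

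To conclude, I would observe that the right-hand side above is precisely the joint factorial moment of a vector of independent Poisson variables with parameters $1/\ell$, and invoke the multivariate method of factorial moments: for integer-valued random vectors, convergence (here, exact equality for large $n$) of all joint factorial moments to those of an independent Poisson family implies convergence in distribution, because the Poisson law is moment-determinate. The same computation, after re-expressing the ordinary powers in terms of the descending factorials via Stirling numbers of the second kind, gives the stated convergence of the joint moments as well.

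\textbf{Main obstacle.} There is essentially no analytic difficulty here: the combinatorial identity is exact and needs no asymptotic estimate, and the only genuine step to justify is the moment-determinacy of the Poisson limit, which is standard. This exactness is exactly what makes the uniform case so clean. The point to keep in mind—and the reason this theorem only serves as a warm-up—is that in the general conjugation-invariant setting treated in the sequel this exactness is lost and must be replaced by a careful control of the short cycles together with controlled approximations of the above counts.
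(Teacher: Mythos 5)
Your proof is correct and complete: the exact joint factorial-moment identity $\mathbb{E}\bigl[\prod_{\ell}(\#_\ell \rho_n)_{k_\ell}\bigr]=\prod_{\ell}\ell^{-k_\ell}$ for $n\ge\sum_\ell \ell k_\ell$, combined with the moment-determinacy of the Poisson distribution, is the standard argument and is essentially the one in the cited source; the paper itself only quotes this theorem from \cite{ABT00} and offers no proof of its own, so there is nothing internal to compare against. Note only that the statement as printed has a typo --- the parameter of $\xi_\ell$ should be $\frac{1}{\ell}$, not $\frac{1}{k}$ --- and your computation correctly yields $\frac{1}{\ell}$.
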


It is also known that the asymptotic short cycle structure for words in independent uniformly distributed permutations is Poisson.
More precisely, let {$k>1$} be fixed  and $F_k := \langle x_1, \ldots, x_k\rangle$ be the free group on $k$ generators $\{x_1, \ldots, x_k\}.$ \emph{In the sequel, the index $k$ will always refer to the number of generators.} An element $w$ of $F_k$ can be written
\[  w= x_{k_r}^{\varepsilon_r}\ldots x_{k_1}^{\varepsilon_1},\]
with $\varepsilon_j = \pm 1$ and if $k_{j+1}=k_j$ then $\varepsilon_{j+1}=\varepsilon_{j}$. Such a form is unique, we call it in the sequel the \emph{canonical form} of the word $w.$


For every group $G,$ $w = x_{k_r}^{\varepsilon_r}\ldots x_{k_1}^{\varepsilon_1}$ induces a word map from the Cartesian product $G^k$ into $G$ by substitution :
\[ \begin{array}{rrcl}
   w: &    G^k & \rightarrow & G\\
      & (g_1, \ldots, g_k) & \mapsto & g_{k_r}^{\varepsilon_r}\ldots g_{k_1}^{\varepsilon_1}.
      \end{array}
\]

We consider the case when $G= \mathfrak{S}_{n},$ the symmetric group of size $n \in \mathbb N^*,$ equipped with its composition law. 
When $g_1, \ldots, g_k$ is a $k$-tuple of  random permutations, their image by the word map $w$ is again a random permutation and one can study the distribution of its cycle structure. The more natural case to consider is independent uniformly distributed random permutations. It has been investigated by Nica, and more recently by Puder and coauthors. 
They have shown that the limiting distribution depends only on the algebraic structure of the word $w.$ For any word $\Omega,$ and $d \in \mathbb N^*,$ $\Omega^d$ denotes the composition of $d$ copies of the word $\Omega.$ We say that the word $w$ is a \emph{power} if there exists a word $\Omega \neq 1$ and $d\ge 2$ such that $w= \Omega^d.$   Note that any word $w\neq 1$ can be written $w=\Omega^d$ where $\Omega$ is not a power and $d\geq 1$. We have
the following :

\begin{proposition}
\label{prop:pp} \cite[Theorem 1.3]{MR4138706} \cite[Theorem 1.1]{Nica}
Let $w=\Omega^d\neq 1$ be a word in $F_k$ such that  $d\geq 1$ and $\Omega$ is not a power.  Let $\rho_{1,n}, \ldots, \rho_{k,n}$ be independent and \textbf{uniformly distributed} on $ \mathfrak{S}_{n}.$ Then the random vector       $(\#_1 w(\rho_{1,n}, \ldots, \rho_{k,n}), \rho_{k,n}),\ldots,\#_{d'} w(\rho_{1,n}, \ldots, \rho_{k,n}))$
    converges in distribution  to a limit that depends only on the power $d$ and the maximal length $d'$ of the cycles under consideration. Moreover, the joint moments converge also to the corresponding moments of the limiting law.  In particular,
\begin{align} \label{equation:Expectation_of_fixed_points}
    \mathbb E\left(\#_1 w(\rho_{1,n}, \ldots, \rho_{k,n})\right) \xrightarrow[n \rightarrow \infty]{} \psi(d)
\end{align}
where $\psi(d):=\sum_{\ell|d} 1$ is the number of divisors of the integer $d$.
\end{proposition}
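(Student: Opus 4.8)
The plan is to proceed by the method of moments. Since the announced limit is built out of Poisson-type variables, whose laws are determined by their moments, it suffices to show that every joint factorial moment of the vector $(\#_1 w, \ldots, \#_{d'} w)$ converges to a finite limit depending only on $d$ and $d'$; convergence in distribution then follows. I would first record the combinatorial meaning of these moments: the joint factorial moment $\mathbb{E}\big(\prod_{\ell}(\#_\ell w)_{(m_\ell)}\big)$ counts ordered families of pairwise disjoint cycles of prescribed lengths of the permutation $w(\rho_{1,n}, \ldots, \rho_{k,n})$, weighted by the probability that these cycles are realised. Each such event is a conjunction of constraints of the form ``$w$ sends the point $p$ to the point $q$'', which, upon writing $w$ in its canonical form and tracing the successive letters, unfolds into a path visiting intermediate points and imposing elementary constraints $\rho_{j,n}(a) = b$ on the individual generators.

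The second step is to exploit independence and the uniform law. Conditioning on the pattern of coincidences among the intermediate points, the probability of a configuration factorises over the $k$ generators, and for each generator the relevant quantity is the probability that a uniform permutation realises a prescribed partial matching: if $\rho_{j,n}$ must send $s$ distinct points to $s$ distinct points, this probability is exactly $(n-s)!/n! \sim n^{-s}$. Encoding a configuration by the multigraph whose vertices are the distinct points visited and whose edges are the elementary constraints, the contribution of a configuration with $v$ vertices and $s$ edges is therefore of order $n^{\,v-s}$, the factor $n^{v}$ coming from the choice of the distinct labels.

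The heart of the argument is then the asymptotic power counting. I would show that every admissible configuration satisfies $v \le s$, so that only the balanced configurations $v = s$ survive in the limit, and classify them. For a connected configuration the balance $v = s$ forces the constraint multigraph to be unicyclic, and the single cycle reflects the cyclic structure of the word traced around a closed loop. Because $w = \Omega^d$ with $\Omega$ not a power, reading the loop amounts to choosing how many times the primitive pattern $\Omega$ is wound, and the admissible windings are indexed by the divisors of $d$: this is exactly the mechanism producing $\psi(d)$ in \eqref{equation:Expectation_of_fixed_points}, and more generally a limit depending only on $d$ and $d'$. The non-power hypothesis on $\Omega$ is essential here, as it forbids spurious internal periodicities that would create additional balanced loops and inflate the count.

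The main obstacle I expect is twofold. First, proving the inequality $v \le s$ for all configurations, that is, ruling out the tree-like configurations with $v = s+1$, requires a careful analysis of the cancellation structure of the canonical form of $w$; this is precisely where the cyclically reduced nature of $\Omega^d$ enters and where the assumption that $\Omega$ is not a power does the essential work. Second, the bookkeeping for the higher joint moments is delicate: one must verify that the balanced configurations involving several cycles decouple asymptotically, so that the limiting joint moments factorise and match those of the announced limit. A convenient alternative that sidesteps part of this analysis is to treat the non-power case $d=1$ first---establishing that the small cycles of $\Omega(\rho_{1,n}, \ldots, \rho_{k,n})$ converge to the same independent Poisson variables $\xi_\ell$ as for a uniform permutation---and then to transfer to $w = \Omega^d$ using the deterministic rule that an $\ell$-cycle of $\Omega$ splits into $\gcd(\ell,d)$ cycles of length $\ell/\gcd(\ell,d)$ under the $d$-th power. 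Since a small cycle of $\Omega^d$ can only come from a cycle of $\Omega$ of length at most $d\, d'$, the small-cycle vector of $w$ is a fixed function of the small-cycle vector of $\Omega$, and the limit law together with the moment convergence is inherited; in particular $\mathbb{E}(\#_1 w) \to \sum_{\ell \mid d} \ell \cdot \tfrac{1}{\ell} = \psi(d)$.
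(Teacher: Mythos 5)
This proposition is not proved in the paper at all: it is quoted from Nica \cite{Nica} and Hanany--Puder \cite{MR4138706}, so there is no internal proof to compare against. Your sketch is essentially a reconstruction of Nica's original moment-method argument, and the graph encoding and power counting you describe are exactly the machinery the paper redevelops (Lemma \ref{lem:straight}, Proposition \ref{prop:uniform}) to push the result to conjugation-invariant laws. Two comments on the substance. First, your closing reduction is clean and correct, and I would promote it to the backbone of the proof: once the primitive case $d=1$ is settled (small cycles of $\Omega(\rho_{1,n},\dots,\rho_{k,n})$ asymptotically independent Poisson of parameters $1/\ell$, with convergence of all joint moments), the vector $(\#_1 w,\dots,\#_{d'}w)$ is a deterministic function of $(\#_1\Omega,\dots,\#_{dd'}\Omega)$ via the splitting of an $\ell$-cycle of $\Omega$ into $\gcd(\ell,d)$ cycles of $\Omega^d$, and $\mathbb E(\#_1 w)=\sum_{\ell\mid d}\ell\,\mathbb E(\#_\ell\Omega)\to\sum_{\ell\mid d}1=\psi(d)$. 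With this route the divisor count comes for free and you never need to classify windings of $\Omega$ around a cycle; your third paragraph and your last paragraph are two descriptions of the same count, and only one of them needs to be carried out. Second, the inequality $v\le s$ is only true after $w$ has been conjugated into a cyclically reduced form; this is harmless under the uniform law, since conjugation preserves the cycle type, but it must be said explicitly --- it is precisely the role played by the hypothesis $k_1\neq k_r$ elsewhere in the paper.

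That said, the two steps you flag as obstacles are not side issues: they are the entire technical content of Nica's theorem, and as written they are announced rather than proved. For (i), the exclusion of tree configurations, the argument is that a closed walk covering a tree with at least one edge must traverse the unique edge at some leaf twice in immediate (cyclic) succession and in opposite directions, which forces a cancellation $x_jx_j^{-1}$ forbidden by (cyclic) reducedness. For (ii), one must actually verify that each balanced connected configuration is a cycle around which the primitive word winds $h$ times for some $h\mid d$, that the non-power hypothesis makes each such $h$ contribute exactly $1$, and that for higher joint moments the configurations attached to several marked points decouple asymptotically. Until these are written out, the proposal is a correct and well-targeted roadmap, but not yet a proof.
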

Remarkably, the limiting law does not
 depend on $\Omega.$ In particular, the limiting law is  the same  if we replace the word  $\Omega$ by the word  $x_1$ in just one letter.

Our goal in this paper will be to explore universality for these beautiful results. Before stating ours, let us give a few motivations 
to explore words in random permutations. The initial motivations of \cite{Nica} was related to free probability theory and the notion of asymptotic freeness. Without going too much into the details, one can say that for a $k$-tuple of random matrices $X_{1, N}, \ldots, X_{k,N}$ with a prescribed joint law, their asymptotic joint distribution in the framework of free probability theory is determined by the limit of $\mathbb E(\frac{1}{N}\mathrm{Tr}\,w(X_{1, N}, \ldots, X_{k,N})),$ for any word $w.$ If the random matrices are independent and their distribution is unitarily invariant, the behavior of the limit is well known and related to the notion of freeness - one says that such random matrices are asymptotically free.  It was natural in this context to ask the same question for permutation matrices. This is the point of view of Nica. As the trace of a permutation matrix is related to the number of fixed points, one can see that the question of asymptotic freeness in this case is closely related to the study of the short cycle structure.
In the same  direction, one can also mention the theory of traffic distribution, developed by Male and coauthors (see e.g. \cite{Ma11, AuMa20}), exploring the case when unitary invariance is replaced by permutation invariance and the notion of second order freeness introduced by Mingo and Speicher (see e.g. \cite{MS,Mal2}. Talking about links with random matrix theory, \cite{Dubach} studies the total number of cycles for  the commutator of two independent uniform permutations by using a connection with the moments in non-Hermitian random matrices.

\noindent
In  a series of papers \cite{PuPar15, MR4138706, MaPu19},
Puder and coauthors use the properties of the distribution of some statistics of $w(\rho_{1,n}, \ldots, \rho_{k,n}),$ as the number of fixed points, short cycles etc. to characterize the algebraic properties 
of the word $w,$ through what they call the primitivity rank of the word $w.$ Understanding $w(\rho_{1,n}, \ldots, \rho_{k,n})$ can then give access to properties of the word map for other groups $G.$

In the present work, the goal is to extend Proposition \ref{prop:pp}
to a {$k$}-tuple $(\sigma_{1,n}, \ldots, \sigma_{k,n})$ of \textbf{independent} and \textbf{conjugation invariant} random permutations. 
\begin{definition}
For any $n \ge 1,$ let $(\sigma_{1,n}, \ldots, \sigma_{k,n})$ be random permutations in $(\mathfrak S_n)^k.$
We say that the (distribution of the) $k$-tuple $(\sigma_{1,n}, \ldots, \sigma_{k,n})$  is conjugation invariant if  for any fixed permutations $\sigma_1, \ldots, \sigma_k \in \mathfrak{S}_{n},$ $(\sigma_1^{-1}\circ \sigma_{1,n} \circ \sigma_1, \ldots, \sigma_k^{-1}\circ \sigma_{k,n} \circ \sigma_k)$ has the same distribution as $(\sigma_{1,n}, \ldots, \sigma_{k,n}).$
\end{definition}

One can give several natural motivations to extend the results known for uniform permutations in this direction. First, if we go back to the question of asymptotic freeness mentioned above, seeking for universality for these questions has always been a strong motivation. For example, the work \cite{BeGe10} is clearly presented as an extension of \cite{Nica} with motivations arising from free probability theory and studies some particular conjugation invariant distributions. It is also worth mentioning a connection with random maps : in \cite{BuCuPe19} for example, the study of random maps is strongly related to the study of the cycle structure of the product of two permutations, one with prescribed cycle structure (corresponding to the faces of the map), the other being an involution without fixed points (corresponding to the edges), both naturally inheriting conjugation invariant distributions.

Before stating our theorem, let us detail our assumptions on random permutations having few short cycles.

\begin{assumption}
Let $S_1, S_2$ be two finite subsets of $\mathbb N^*.$ Let $(\sigma_n)_{n \in \mathbb N^*}$ be a sequence of random permutations (with for all $n \ge 1, \sigma_n \in \mathfrak S_n$). 
We say that the sequence $(\sigma_n)_{n \in \mathbb N^*}$ has few short cycles with respect to $(S_1, S_2)$ 
\begin{itemize}
    \item in the weak sense if 
    \begin{equation}
 \forall i \in   S_1, \frac{\#_{i} \sigma_{n}}{\sqrt{n}} \xrightarrow[n\to\infty]{\mathbb P}0,
\end{equation}
and
\begin{equation}
 \forall i \in S_2, \frac{\#_{i} \sigma_{n}}{{n}} \xrightarrow[n\to\infty]{\mathbb P}0,
\end{equation}
where $\xrightarrow[n\to\infty]{\mathbb P}$ stands for the convergence in probability,
  \item   and in the strong sense if
\begin{equation}\label{hyp:prim}
\forall p \in \mathbb N^*, c_1, \ldots, c_p \in S_1, \lim_{n\to\infty} \mathbb E\left( \prod_{i=1}^p \left(\frac{\#_{c_i} \sigma_{n}}{\sqrt{n}}\right)\right) =0
\end{equation}
and
\begin{equation}\label{hyp:prim_2}
\forall i \in S_2, \lim_{n\to\infty} \mathbb E \left(\frac{\#_{i} \sigma_{j,n}}{{n}}\right) =0.
\end{equation}    
\end{itemize}

\end{assumption}

Otherwise stated, we ask for a good probabilistic control on some cycles of length less than the maximum of $S_1 \cup S_2.$
This control may be in probability for the weak version or on in mixed moments for the strong version. 

In the assumptions of our results, we will need to choose the sets $S_1$ and $S_2$ according to the word $w$ we are interested in. We detail now this choice.


For a given word $w$ in $F_k,$ we will alternatively use its canonical form - that we have already introduced above~: $  w= x_{k_r}^{\varepsilon_r}\ldots x_{k_1}^{\varepsilon_1},$
with $\varepsilon_j = \pm 1$ and if $k_{j+1}=k_j$ then $\varepsilon_{j+1}=\varepsilon_{j}$ - and its \emph{reduced form}. Any word $w \in F_k$ can be written $w:=x_{\alpha_1}^{\beta_{1}}\dots x_{\alpha_\ell}^{\beta_{\ell}}$  with $\beta_i \in \mathbb{Z}^*$, $\ell \ge 1$,  $\alpha_i \in \llbracket k\rrbracket$,  $\alpha_{i+1}\neq \alpha_i$,   where $\llbracket k \rrbracket = \{1, \ldots, k\}.$ 
  We call this rewriting the \emph{reduced form} of the word.

  We  define the following sets of integers : for any $w\in F_k$ with reduced form $x_{\alpha_1}^{\beta_{1}}\dots x_{\alpha_\ell}^{\beta_{\ell}}$ and $1\leq j \leq k$, let   $${\mathcal N'_j}(w) :=\{ |\beta_i|,  \alpha_i=j \},\widehat{ \mathcal N}_{j}(w):=\{ |d|: \exists \beta\in \mathcal N'_j(w), d|\beta  \} \textrm{ and } \mathcal N_j(w):=\llbracket  \max \widehat{ \mathcal N}_j(w) \rrbracket  \cup \{ { \mathcal N'_j}(w)+{ \mathcal N'_j}(w) \}.$$

  Note that $ \mathcal N_j(w) \subset \llbracket 2\max (\widehat{\mathcal N}_{j}(w)) \rrbracket = \llbracket 2\max (\mathcal N^\prime_{j}(w)) \rrbracket.$\\
  
  For example, for $w=x_1x^3_2x_1^{-1}x_2^6,$ $\mathcal N'_1(w) = \widehat {\mathcal N}_1(w) =\{ 1\}, \mathcal N_1(w) =\{1,2\},$ 
  $\mathcal N'_2(w) = \{3,6\}, \widehat {\mathcal N}_2(w) =\{1,2,3,6\}, \mathcal N_2(w) =\{1,2,3,4,5,6,9,12\}.$\\


Our main result is the following extension of Proposition \ref{prop:pp}.
{
\begin{theorem}\label{th:conv_dis}
Let  $w$ be a word in  $F_k$ with canonical form $w= x_{k_r}^{\varepsilon_r}\ldots x_{k_1}^{\varepsilon_1},$  where $k_1 \neq k_r$ (there is at least two different letters). We write $w= \Omega^d,$ with   $\Omega$ which is not a power and $d \ge 1.$  

Let $\sigma_{1,n}, \ldots, \sigma_{k,n}$ be independent random permutations, with conjugation invariant distributions 
such that $\forall j \in \llbracket k\rrbracket,$ the sequence $(\sigma_{j,n})_{n \ge 1}$ has few short cycles with respect to $(\widehat { \mathcal  N}_j(w), \mathcal N_j(w))$ in the strong sense.

Then, for any $d^\prime \in \mathbb N^*,$
$(\#_{1} w(\sigma_{1,n}, \ldots, \sigma_{k,n}),\dots,\#_{d^\prime} w(\sigma_{1,n}, \ldots, \sigma_{k,n}))$ converges in distribution to a universal limit that depends only on the power $d$ and the maximal length $d^\prime$ of the cycles under consideration. Moreover,  the joint moments  converge also to those of the limit.
\end{theorem}}

The convergence obtained in Theorem \ref{th:conv_dis} is a convergence 
in distribution and for all the moments. By standard probabilistic arguments, that will be briefly presented at the end of Section \ref{sec:proof2}, the assumptions can be weakened to get the following corollary. 

\begin{corollary}\label{corollary_dis} 
Let  $w$ be a word in  $F_k$ with canonical form $w= x_{k_r}^{\varepsilon_r}\ldots x_{k_1}^{\varepsilon_1},$  where $k_1 \neq k_r$ (there is at least two different letters). We write $w= \Omega^d,$ with   $\Omega$ which is not a power and $d \ge 1.$  

Let $\sigma_{1,n}, \ldots, \sigma_{k,n}$ be independent random permutations, with conjugation invariant distributions 
such that $\forall j \in \llbracket k\rrbracket,$ the sequence $(\sigma_{j,n})_{n \ge 1}$ has few short cycles with respect to $(\widehat { \mathcal  N}_j(w), \mathcal N_j(w))$ in the weak sense.

Then, for any $d^\prime \in \mathbb N^*,$ the random vector
$(\#_{1} w(\sigma_{1,n}, \ldots, \sigma_{k,n}),\dots,\#_{d^\prime} w(\sigma_{1,n}, \ldots, \sigma_{k,n}))$ converges in distribution to a universal limit that depends only on the power $d$ and the maximal length $d^\prime$ of the cycles under consideration. \end{corollary}

Let us  first comment on the condition, $k_1\neq k_r$.
For example, if $w=x_1x_2x_1^{-1}$, $w(\sigma_1,\sigma_2)$ has the same cycle structure as $\sigma_2$ so obviously there is no universality.

One can easily check the following cyclic invariance : for any $\boldsymbol \sigma=(\sigma_1, \ldots, \sigma_k) \in \mathfrak S^k_n$ and $w \in F_k,$ for any $\ell \ge 1, i \in \llbracket  k \rrbracket,$
\[ \#_\ell (\sigma_i w(\boldsymbol\sigma)) = \#_\ell ( w(\boldsymbol\sigma)\sigma_i).\] Using this remark, one can check  that when $k_1 = k_r,$ there are 3 cases : 
\begin{itemize}
    \item either $w=1$ : in this case there is nothing to prove since $w(\boldsymbol \sigma)=I_n$, that fixes all integers,
    \item or $w= w_1x_{j}^\alpha w_1^{-1}$ : in this case, the cycle structure of $w(\boldsymbol \sigma)$ is the same as that of $\sigma_j^\alpha$ and there is no universality,
    \item or $w=w_1w_2$ with $w_1$ and $w_2$ two words  such that the canonical form of $w':=w_2w_1=x_{{k}^\prime_{r^\prime}}^{\varepsilon^\prime_{r^\prime}}\ldots x_{k^\prime_1}^{\varepsilon^\prime_1}$ satisfies $k^\prime_1\neq k^\prime_{r^\prime}.$ In this case, the cycle structure of $w^\prime$ is the same  as that of $w$.
\end{itemize}
In other words, if, after successive cyclic simplifications,  the word $w$ still contains at least two different letters, then, by applying this word to independent  permutations with conjugation invariant distributions and few short cycles,  we get a random permutation with an universal limiting joint distribution for short cycles. 

Similar results have already been proved  in the case
when the permutations are uniformly chosen among permutations with restrictions on cycle lengths  in \cite[Theorem 3.7]{BeGe10} (the latter being an extension of \cite{Neagu}, in which only the fixed points of the words in permutations were studied). They can also be seen as an extension of our previous work \cite{KaMa20} for the product of permutations to any non-trivial word in the permutations. In the case of the product, we could get optimal assumptions, requiring the permutations to have only few fixed points and cycles of size 2.
These assumptions were reminiscent of the connectivity assumptions for random maps appearing in \cite{BuCuPe19}. In the case when $|\beta_i|=1,$ for all $i \le \ell,$ we recover the exact same assumptions. 
For general words, we nevertheless require a more stringent control on short cycles, that may not be optimal.  Optimality is discussed in Section \ref{sec:opt}, it does not hold in full generality but we can show that the assumptions we got for the product are also optimal in the case of the commutator of two permutations.

Note that in this work we focus only on short cycles but in the literature, there is also interest in the limit shape of the Young diagram associated to the cycle structure of a random permutation. We refer to \cite{MR3457548} for further details and exact definitions. This kind of convergence requires also the study of long cycles, which is an interesting question beyond the scope of this study. 
In the framework  of conjugation invariant permutations, short cycles are still very interesting  since they gives access to local events i.e.
if $\sigma_n$ and $\rho_n$ are conjugation invariant random permutations such that 
$(\#_1\,\sigma_n, \ldots, \#_\ell\,\sigma_n) \overset{d}= (\#_1\,\rho_n, \ldots, \#_\ell\,\rho_n)$
for some  $\ell<\frac n2$, then for any $1\leq i_1,i_2,\ldots,i_\ell \leq n$ and  for any $1\leq j_1,j_2,\ldots,j_\ell\leq n$, the equality
$$ \mathbb{P}(\sigma_n(i_1)=j_1,\ldots \sigma_n(i_\ell)=j_\ell) =\mathbb{P}(\rho_n(i_1)=j_1,\ldots \rho_n(i_\ell)=j_\ell)  $$
holds true.  This remark is a direct consequence, for example, of \cite[Equation (5.68)]{hamaker2022characters} (see also \cite{diaconis1989generalization}).  Theorem \ref{th:conv_dis} can then be seen as a local convergence result. \\ \ 
  
\noindent
\textbf{Acknowledgements:} Both authors would like to thank Maxime Février for very useful discussions. S.K. would also like to thank Zachary Hamaker and Camille Male for bibliographical help.
   M.M. is partially supported by the Labex CEMPI (ANR-11-LABX-0007-01) and S.K. is partially supported by a Leverhulme Trust Research Project Grant (RPG-2020-103), Labex CIMI (ANR-11-LABX-0040) and an ERC Project LDRAM (ERC-2019-ADG Project 884584).

\section{Technical lemmas}

The technique of proof is inspired by \cite{KaMa20}. It is based on a graphical representation of the image of several points by the permutations composing the word $w.$ 
The strategy of the proof will be to identify the class of graphs giving the main contribution in the uniform case and then to show that, under our assumptions, the non vanishing contributions are the same in the conjugation invariant case.\\


Let us introduce some definitions. Some of them have already been used in \cite{KaMa20} but we will need  colored versions of the graphs used therein.
\begin{itemize}

 \item We denote by $\mathbb G_\ell^n$ the set of oriented  graphs with vertices $\llbracket n\rrbracket$  having exactly $\ell$ (oriented) edges and  $\mathbb G^n$ the set of oriented  graphs with vertices $\llbracket n\rrbracket.$ We allow here loops but not multiple edges.

 We denote also by $\mathbb H_\ell^n$ the set of oriented  graphs with vertices $\llbracket n\rrbracket$  each colored by red or white and   having exactly $\ell$ (oriented) edges and  $\mathbb H^n$ the set of  oriented  graphs with vertices $\llbracket n\rrbracket,$  each colored by red or white. 
 Given  a (possibly colored) graph $g$ we denote by $E_g$ the set of its edges. 
 \item Given $g\in \mathbb{G}^n$ and $h\in \mathbb{H}^n$, we say that $h$ is of type $g$ and we denote it by $\dot{h}=g$ if $E_g=E_h$ i.e. $g$ is the non-colored version of $h$.
 
 \item For any $\sigma \in \mathfrak S_n,$ we denote by $g_{\sigma}$ the graph with vertices  $\llbracket n\rrbracket$ and oriented edges 
 
 \[   \bigcup_{\ell=1}^n \left\{\left( \ell, \sigma(\ell)\right) \right\}.\]
 
 \item Given $g \in \mathbb G^n,$ we denote by 
 \[\mathfrak S_{n,g}:=\left\{\sigma \in \mathfrak S_n, \forall (i,j)\in E_g, \sigma(i) = j\right\}.\]
 In other words, $\mathfrak S_{n,g}$ is the set of permutations $\sigma$ such that $g$ is a subgraph of $g_\sigma.$
 
\item A vertex $i$ of $g$ is called \textit{isolated} if $E_g$ does not contain any edge of the form $(i,j)$ or $(j,i)$ nor a loop $(i,i)$. Let $g\in\mathbb{G}^n $, we denote by  $\tilde{g}$  the graph obtained  from  $g$  after removing isolated vertices.

\item A connected component of $g$ is called \textit{trivial} if it is reduced to one isolated vertex.

\item Two graphs (resp. colored graphs) are isomorphic if there exists a permutation of their vertices that preserves edges (resp. edges and colors). 
Let $\mathcal{R}$ be the equivalence relation  on colored graph such that $h_1\mathcal{R} h_2$ if  $\tilde h_1$ and $\tilde h_2$ are two colored graphs that are isomorphic and $\dot{\mathcal{R}}$ be the equivalence relation  on non-colored graph such that $g_1\dot{\mathcal{R}} g_2$ if  $\tilde g_1$ and $\tilde g_2$ two graphs that are isomorphic. 
 We denote by  $\widehat{\mathbb{H}}_\ell:=\bigslant{\cup_{n\geq1} \mathbb{H}_\ell^n}{\mathcal{R}}
$  and  $\widehat{\mathbb{G}}_\ell:=\bigslant{\cup_{n\geq1} \mathbb{G}_\ell^n}{\dot{\mathcal{R}}}
$   the respective set of  equivalence classes of $\cup_{n\geq1} \mathbb{H}_\ell^n$ and $\cup_{n\geq1} \mathbb{G}_\ell^n$  and we set $  \widehat{\mathbb{H}} := \bigcup_{\ell \ge 1}  \widehat{\mathbb{H}}_\ell $ and
$  \widehat{\mathbb{G}} := \bigcup_{\ell \ge 1}  \widehat{\mathbb{G}}_\ell .$

\end{itemize}

We will now introduce the graphs useful for our purpose. We start from an integer $m$ and look at its orbit along the word $w(\sigma_1, \ldots, \sigma_k).$  If the reduced form of the word is $w= x_{\alpha_\ell}^{\beta_\ell} \ldots x_{\alpha_1}^{\beta_1},$ we perform from $m$ a walk of length $|\beta_1|$ on the graph $g_{\sigma_{\alpha_1}},$
then a walk of length $|\beta_2|$ on the graph $g_{\sigma_{\alpha_2}},$ etc. This will provide the vertices and edges of the $k$ graphs corresponding to each permutation and each vertex is colored red if it is the entering or exiting point of the walks (see the definitions and the proof of Lemma \ref{graph_geometry} below for more details). In comparison to \cite{KaMa20}, considering graphs colored this way will allow more accurate control to show that the graphs containing loops do not contribute to the limiting distribution.

\begin{itemize}

\item Let $n \in \mathbb N^*$ and $\boldsymbol\sigma =(\sigma_1, \ldots, \sigma_k) \in (\mathfrak S_n)^k.$ Let $m \in \llbracket n\rrbracket$ be fixed. 
 For a word with canonical form $w=  x_{k_r}^{\varepsilon_r}\ldots x_{k_1}^{\varepsilon_1},$ we define
 \[ i_0^{m, w}(\boldsymbol\sigma):=m,\,  i_1^{m, w}(\boldsymbol\sigma):= \sigma_{k_1}^{\varepsilon_1}(m),\, \ldots,\, i_r^{m, w}(\boldsymbol\sigma):= \sigma_{k_r}^{\varepsilon_r}\ldots \sigma_{k_1}^{\varepsilon_1}(m)=w(\boldsymbol{\sigma})(m), \]
for any $a \ge 1 $ and $0 \le b \le r-1,$
\[  i_{ar+b}^{m, w}(\boldsymbol\sigma):= w^a(\boldsymbol{\sigma})(i_{b}^{m, w}(\boldsymbol\sigma)).\]

We also define the colors of the vertices as follows : the color of the vertex $j$ with respect to the permutation $\sigma_i$ is given by
\[\mathfrak c^{m,w}_{i,j}(\boldsymbol\sigma)=  \text{red},  \text{if } \begin{cases}
  j=m \text{ and } i={k_1}    \\ \text{or } j= w(\boldsymbol{\sigma})(m)  \text { and } i={k_r}
 \\ \text{or } \exists \ell \text{ such that } j=i_\ell^{m, w}(\boldsymbol\sigma),  i\in\{k_{\ell},k_{\ell+1}\} \text{ and } k_\ell \ne k_{\ell+1}  
 \end{cases},\]  and white otherwise.
 
 For any $i \in \llbracket k\rrbracket,$ $\mathcal G_i^{m, w}(\boldsymbol\sigma) \in \mathbb{G}^n $ is the graph with vertices  $\llbracket n\rrbracket$ and edges 
 
 \[  E_{ \mathcal G_i^{m, w}(\boldsymbol\sigma)} := \left(\bigcup_{\{\ell : k_\ell = i, \varepsilon_\ell=1 \}} \left\{\left( i_\ell^{m, w}(\boldsymbol\sigma), i_{\ell+1}^{m, w}(\boldsymbol\sigma)\right) \right\}\right) \bigcup \left( \bigcup_{\{\ell : k_\ell = i, \varepsilon_\ell=-1 \}} \left\{\left( i_{\ell+1}^{m, w}(\boldsymbol\sigma), i_{\ell}^{m, w}(\boldsymbol\sigma)\right) \right\} \right), \]

 $\mathcal H_i^{m, w}(\boldsymbol\sigma) \in \mathbb{H}^n$  is the graph with vertices  $\llbracket n\rrbracket,$ the color of vertex $j$ being $c^{m,w}_{i,j}(\boldsymbol\sigma),$ and edges  $ E_{ \mathcal G_i^{m, w}(\boldsymbol\sigma)}.$ Finally, let $ {\widehat{\mathcal G}}_i^{m, w}(\boldsymbol\sigma)$ and $ {\widehat{\mathcal H}}_i^{m, w}(\boldsymbol\sigma)$ be respectively the equivalence class of   $\mathcal G_i^{m, w}(\boldsymbol\sigma)$ and $\mathcal H_i^{m, w}(\boldsymbol\sigma).$
 
 Given $s\subset\llbracket n \rrbracket$, let  $ {\mathcal G}_i^{s, w}(\boldsymbol\sigma)$ be the graph such that  
 $E_{ {\mathcal G}_i^{s, w}(\boldsymbol\sigma)}=\bigcup_{m\in s} E_{ {\mathcal G}_i^{m, w}(\boldsymbol\sigma)}$ and let $ {\mathcal H}_i^{s, w}(\boldsymbol\sigma)$ be the graph such that  
 $E_{ {\mathcal H}_i^{s, w}(\boldsymbol\sigma)}=\bigcup_{m\in s} E_{ {\mathcal H}_i^{m, w}(\boldsymbol\sigma)}$ and the color of a vertex $j$ is red  in $ {\mathcal H}_i^{s, w}(\boldsymbol\sigma)$ if and only if the color of $j$ is red in $ {\mathcal H}_i^{m, w}(\boldsymbol\sigma)$ for some $m\in s$.
 

For example, if $w=(x_1^2x_2^3)^2$, $\sigma_1=(4,2,3,8)(1,7)(5,6)$ and $\sigma_2= (1,2,3,4,5)(9,10),$ we have 
 
 \begin{center}
    
 $ \mathcal G_1^{1, w}(\boldsymbol\sigma) = $\begin {tikzpicture} [baseline={([yshift={-30pt}]current bounding box.north)},-latex,auto,node distance =1 cm and 1.5cm,on grid,
semithick,
state/.style ={ circle,top color =white, bottom color = processblue!20,
draw,processblue, text=blue, minimum width =0.1 cm}]
\node[state] (C) {$1$};
\node[state] (A) [above right=of C] {$2$};
\node[state] (A2) [ right=of A] {$3$};
\node[state] (B) [above =of C] {$4$};
\node[state] (D) [right =of C] {$7$};
\path (B) edge [bend left =25]  (A);
\path (A) edge [bend left =25]  (A2);
\path (C) edge [bend left =25]  (D);
\path (D) edge [bend left =25]  (C);


\end{tikzpicture},
$\mathcal G_2^{1, w}(\boldsymbol\sigma)=$  \begin {tikzpicture} [baseline={([yshift={-30pt}]current bounding box.north)},-latex,auto,node distance =1 cm and 1.5cm,on grid,
semithick,
state/.style ={ circle,top color =white, bottom color = processblue!20,
draw,processblue, text=blue, minimum width =0.1 cm}]
\node[state] (A)  {$1$};
\node[state] (B) [ right=of A] {$2$};
\node[state] (C) [ right=of B] {$3$};
\node[state] (D) [ right=of C] {$4$};
\node[state] (E) [below=of C] {$5$};
\path (A) edge [bend left =25]  (B);
\path (B) edge [bend left =25]  (C);
\path (C) edge [bend left =25]  (D);
\path (D) edge [bend left =25]  (E);
\path (E) edge [bend left =25]  (A);

\end{tikzpicture},

 $\mathcal H_1^{1, w}(\boldsymbol\sigma)$ =\begin {tikzpicture} [baseline={([yshift={-30pt}]current bounding box.north)},-latex,auto,node distance =1 cm and 1.5cm,on grid,
semithick,
state/.style ={ circle,top color =white, bottom color = white,
draw,black, text=black, minimum width =0.1 cm}]
\node[fill=red,circle] (C) {$1$};
\node[state] (A) [above right=of C] {$2$};
\node[fill=red,circle] (A2) [ right=of A] {$3$};
\node[fill=red,circle] (B) [above =of C] {$4$};
\node[state] (D) [right =of C] {$7$};
\path (B) edge [bend left =25]  (A);
\path (A) edge [bend left =25]  (A2);
\path (C) edge [bend left =25]  (D);
\path (D) edge [bend left =25]  (C);


\end{tikzpicture},
$\mathcal H_2^{1, w}(\boldsymbol\sigma)= $ \begin {tikzpicture} [baseline={([yshift={-30pt}]current bounding box.north)},-latex,auto,node distance =1 cm and 1.5cm,on grid,
semithick,
state/.style ={ circle,top color =white, bottom color = white,
draw,black, text=black, minimum width =0.1 cm}]
\node[fill=red,circle] (A)  {$1$};
\node[state] (B) [ right=of A] {$2$};
\node[fill=red,circle] (C) [ right=of B] {$3$};
\node[fill=red,circle] (D) [ right=of C] {$4$};
\node[state] (E) [below=of C] {$5$};
\path (A) edge [bend left =25]  (B);
\path (B) edge [bend left =25]  (C);
\path (C) edge [bend left =25]  (D);
\path (D) edge [bend left =25]  (E);
\path (E) edge [bend left =25]  (A);

\end{tikzpicture}.
 \end{center}
\end{itemize}

One can notice that for any $s,i$ and $w,$ $\mathcal G_i^{s, w}(\boldsymbol\sigma)$ is a subgraph of $g_{\sigma_i}.$
Fro any permutation $\sigma,$ the graph $g_\sigma$ and all its subgraphs lie in classes of type $\mathcal C_{\boldsymbol\gamma,\boldsymbol\gamma'},$ that we define now.

\begin{itemize}
     
\item Let $\ell, \ell' \in \mathbb N^*,$   $\boldsymbol\gamma:=(\gamma_1, \ldots, \gamma_\ell) \in  (\mathbb N^*)^\ell$ 
and $\boldsymbol\gamma':=(\gamma'_1, \ldots, \gamma'_{\ell'})\in  (\mathbb N^*)^{\ell'}$.
We denote by $\mathcal C_{\boldsymbol\gamma,\boldsymbol\gamma'}$ the set of classes of  graphs  $g_{\boldsymbol\gamma, \boldsymbol\gamma'}$ defined as follows : for $n \ge  \ell + \sum_{i=1}^\ell\gamma_i + \sum_{i=1}^{\ell'} \gamma'_i,$
$g_{\boldsymbol\gamma, \boldsymbol\gamma'} \in \mathbb G^n$ has $\ell+\ell'$ non trivial connected components, for any $j, j' \in \llbracket  \ell+\ell' \rrbracket,$ if $j \neq j'$ then $j$ and $j'$ are in two distinct connected components. For $j \le \ell',$  
the  component containing $j$ is a directed cycle of 
 length $\gamma'_j$ (the cycles of length $1$ being loops). For $\ell'+1 \le j \le \ell+\ell',$ the component containing $j$
has $\gamma_j$ edges and $\gamma_j+1$ vertices : 
 $j$ is  a vertex  of incoming degree $0$ and outgoing degree $1,$ that we call  the \emph{head,} one vertex of incoming degree $1$ and outgoing degree $0$, that we call the \emph{tail}  and $\gamma_j-1$ vertices of incoming degree $1$ and outgoing degree $1.$
 We call such a component \emph{straight}.
 For example, \\
 \begin{center}
 \begin {tikzpicture} [baseline={([yshift={-30pt}]current bounding box.north)},-latex,auto,node distance =1 cm and 1.5cm,on grid,
semithick,
state/.style ={ circle,top color =white, bottom color = processblue!20,
draw,processblue, text=blue, minimum width =0.1 cm}]
\node[state] (C) {$3$};
\node[state] (A) [above right=of C] {$5$};
\node[state] (A2) [ right=of A] {$6$};
\node[state] (B) [above =of C] {$1$};
\node[state] (D) [right =of C] {$2$};
\node[state] (E) [right =of D] {$4$};
\node[state] (F) [right =of E] {$7$};
\path (B) edge [bend left =25]  (A);
\path (A) edge [bend left =25]  (A2);

\path (C) edge [bend left =25]  (D);

\path (E) edge [bend left =25]  (F);
\path (F) edge [bend left =25]  (E);

\end{tikzpicture}
 $\in  \mathcal{C}_{(1,2),(2)}  \left(=  \mathcal{C}_{(2,1),(2)}\right).$ \\ 
 \mbox{}\\
 \end{center}
 Note that the multiplicity of the entries in the vectors $\boldsymbol\gamma$ and $\boldsymbol\gamma'$ is relevant, but their order  is not.

\item By convention, we will extend the previous definition to the case when $\ell' =0,$ that is the case when there is no loop nor cycle. In this case, for $\ell \in \mathbb N^*,$ for $\boldsymbol \gamma:=(\gamma_1, \ldots, \gamma_\ell) \in  (\mathbb N^*)^\ell,$ we denote by $\mathcal T_{\boldsymbol\gamma}$ the class
$\mathcal C_{\boldsymbol\gamma, \emptyset}$ of graphs $g_{\boldsymbol\gamma}$ having   $\ell$ non trivial connected components, all of them being straight,  the component containing $j \le \ell$ having $\gamma_j$ edges and these components being pairwise distinct.

 For example, 
 \begin{center}
 \begin {tikzpicture} [baseline={([yshift={-30pt}]current bounding box.north)},-latex,auto,node distance =1 cm and 1.5cm,on grid,
semithick,
state/.style ={ circle,top color =white, bottom color = processblue!20,
draw,processblue, text=blue, minimum width =0.1 cm}]
\node[state] (C) {$3$};
\node[state] (A) [above right=of C] {$5$};
\node[state] (A2) [ right=of A] {$6$};
\node[state] (B) [above =of C] {$1$};
\node[state] (D) [right =of C] {$2$};
\node[state] (E) [right =of D] {$4$};
\path (B) edge [bend left =25]  (A);
\path (A) edge [bend left =25]  (A2);

\path (C) edge [bend left =25]  (D);
\end{tikzpicture}
 $\in  \mathcal{T}_{(2,1)}  \left(=  \mathcal{T}_{(1,2)} = \mathcal{C}_{(2,1), \emptyset}\right).$ 
\end{center}

 
 
For a tuple $\boldsymbol \gamma:=(\gamma_1, \ldots, \gamma_\ell) \in  (\mathbb N^*)^\ell,$ and $h \in \mathbb N^*, h\cdot \boldsymbol \gamma  \in  (\mathbb N^*)^{h\ell}$ is the t-tuple obtained by repeating $h$ times the t-uple $\boldsymbol \gamma.$
 
 For example $\mathcal T_{3\cdot (1,4,5)} = \mathcal T_{(1,4,5,1,4,5,1,4,5)} = \mathcal T_{(1,1,1, 4,4,4,5,5,5)}.$\\

\item For any $j \in \llbracket  k \rrbracket,$ we define $\gamma^i(w) := (|\beta_j|)_{\alpha_j=i},$ and $\boldsymbol \gamma(w) := (\gamma^1(w), \ldots, \gamma^k(w)).$

For example, for $w=x_1^4x_2^{-3}x_3^2x_2^5$, $\gamma^1(w)=(4),\gamma^2(w)=(3,5)$ and $\gamma^3(w)=(2).$

\end{itemize}


The structure of the graphs and their colored versions follow the following rules :

\begin{lemma} \label{graph_geometry}
Let $w$ be  a word in $F_k$ with reduced form $w= x_{\alpha_q}^{\beta_q}\ldots x_{\alpha_1}^{\beta_1},$ with $q \ge 2$ and $1\leq i\leq k.$ For any $s \subset \llbracket n \rrbracket,$
\begin{enumerate}
    \item A connected component  of ${\mathcal H}_i^{s, w}(\boldsymbol\sigma)$  either is a cycle or is straight.
    \item A connected component  of ${\mathcal H}_i^{s, w}(\boldsymbol\sigma)$ has at least one red vertex.
    \item If a  connected component  of ${\mathcal H}_i^{s, w}(\boldsymbol\sigma)$ has exactly one red vertex, then it is a cycle of length $d$ dividing $\beta_\ell$ for some $\ell$ such that $\alpha_\ell=i$.
    \item If a connected component of ${\mathcal H}_i^{s, w}(\boldsymbol\sigma)$ is straight, then the head and the tail are red. 
    \item If a connected component of ${\mathcal H}_i^{s, w}(\boldsymbol\sigma)$  has exactly two red points, then it is either
    \begin{itemize}
    \item   a straight component of length         $|\beta_\ell|$ for some $\ell$ such that $\alpha_\ell=i,$ 
        \item 
a cycle of length
    $|\beta_\ell|+|\beta_{\ell'}|$ for some $\ell, \ell'$ such that $\alpha_\ell=\alpha_{\ell'}=i,$ 
    \item 
    a cycle of length
    $ j <|\beta_\ell|$ for some $\ell$ such that $\alpha_\ell=i.$
   \end{itemize}
   \item Under the condition $w(\boldsymbol{\sigma})(m)=m$, if $\ell$ is red in ${\mathcal H}_i^{s, w}(\boldsymbol\sigma)$ , then it is red in ${\mathcal H}_j^{s, w}(\boldsymbol\sigma)$  for some $j\neq i$.
\end{enumerate}
\end{lemma} 

\begin{proof} 
Let $w=x_{\alpha_q}^{\beta_q}...x_{\alpha_1}^{\beta_1}$ and $\boldsymbol{\sigma}=(\sigma_1,\dots,\sigma_k).$ For each $m,$ we can describe the sequence $i_0^{m,w}(\boldsymbol \sigma), \ldots, i_r^{m,w}(\boldsymbol \sigma)$ as follows : starting from $m,$ we perform a walk of length $|\beta_1|$ on the graph $g_{\sigma_{\alpha_1}}$, following the oriented edges if $\beta_1 >0$ or going backward if $\beta_1<0,$ then, after $|\beta_1|$ steps, we walk the same way on $g_{\sigma_{\alpha_2}}$ during $|\beta_2|$ steps etc. The colored graph ${\mathcal H}_i^{m, w}(\boldsymbol\sigma)$ is then the part of $g_{\sigma_i}$ explored by this process where each vertex is colored by red if and only it is the entering or exiting point for one of the walks. 
Given this description, the proof of the lemma is straightforward. 
\end{proof}

We gather hereafter several technical bounds that we will need in the proofs of our main results.
For $ \sigma \in \mathfrak S_n$ and $j \in \llbracket n\rrbracket,$ $c_j(\sigma)$ is the length of the cycle of the permutation $\sigma$ containing $j.$

For a graph $g$ of class $\mathcal T_\gamma,$ the probability of the event $\{ \sigma_n \in \mathfrak S_{n,g}\}$ is controlled by the length of the cycle of a given point, say $1.$ This is stated in the following lemma that we recall from \cite{KaMa20}:

 \begin{lemma}  \label{lem:straight1} 
 Let $\boldsymbol\gamma:=(\gamma_1, \ldots, \gamma_\ell) \in  (\mathbb N^*)^\ell$
 and 
$n \ge v:=\ell + \sum_{i=1}^\ell\gamma_i.$
 For any graph $g \in \mathbb G^n$ of class $\mathcal{T}_{\boldsymbol \gamma}$ and  $\sigma_n$ a random permutation with conjugation invariant distribution, we have
\[
  \left(1- \sum_{i=1}^\ell \mathbb{P}(c_1(\sigma_n) \le  \gamma_i) - \frac{\ell-1}{n-1}  \sum_{i=1}^\ell \gamma_i  \right)  \le     \frac{\mathbb{P}(\sigma_n\in\mathfrak{S}_{n,g})(n-\ell)!}{(n-\ell-\sum_{i=1}^\ell \gamma_i)!} \le 1 .
\]

 \end{lemma}

\begin{proof}
This is exactly Lemma 2.9 in \cite{KaMa20}.
\end{proof}

We will also need a control on $\mathbb{P}(\sigma_n\in\mathfrak{S}_{n,g})$ when $g$ is a graph of class $\mathcal C_{\boldsymbol\gamma, \boldsymbol\gamma'},$ with $\ell'>0.$ This will require to look at the length of the cycles of $\ell'$
given points, for example $1, \ldots, \ell',$ provided they lie in pairwise distinct cycles. More precidely, we have:

\begin{lemma}
\label{lem:straight2}
 Let $\boldsymbol\gamma:=(\gamma_1, \ldots, \gamma_\ell) \in  (\mathbb N^*)^\ell,$ 
$\boldsymbol\gamma':=(\gamma'_1, \ldots, \gamma'_{\ell'})\in  (\mathbb N^*)^{\ell'}$
and 
$n \ge v:=\ell + \sum_{i=1}^\ell\gamma_i + \sum_{i=1}^{\ell'} \gamma'_i.$
For $\ell'>0,$ we define 
\[A^{\boldsymbol\gamma'}=\left\{\sigma \in \mathfrak{S}_n :\forall 1\le j\le \ell' : c_j(\sigma)= \gamma'_j   \text { and }, \forall 1\leq j\neq j'\le \ell', j \text{ and } j' \text{ are not in the same cycle}\right\}.
\]

\begin{enumerate}
\item For $\ell'>0, $ if  $g \in \mathbb G^n$ is a graph of class $\mathcal C_{\boldsymbol\gamma, \boldsymbol\gamma'},$ for any random permutation  $\sigma_n$  with conjugation invariant distribution, we have 
\begin{align*}
  &\frac{
  \mathbb{P}(\sigma_n\in\mathfrak{S}_{n,g}){(n-\ell-\ell')!}}{ (n-\ell-\sum_{i=1}^\ell\gamma_i-\sum_{i=1}^{\ell'} \gamma'_i)!} 
  \le \mathbb{P}(\sigma_n\in A^{\boldsymbol\gamma'}).
\end{align*}

Moreover, 
\begin{equation}\label{calculdirect}
\mathbb{P}(\sigma_n\in A^{\gamma'})
 \leq  \mathbb E\left( \prod_{i=1}^{\ell'} \left({\#_{\gamma'_i} \sigma_{n}}\right)\right) \frac{(n-\ell')!}{n!}  \prod_{i=1}^{\ell'} \gamma'_i. 
\end{equation} 
 
\item  If $\rho_n$ is distributed according to the uniform law then, 
\begin{align*}
  &\mathbb{P}(\rho_n\in A^{\boldsymbol\gamma'}) 
  \left(1-\frac{\ell\sum_{i=1}^{\ell'} (\gamma'_i-1)}{n-\ell'}\right) \left(1 -\frac{\ell\sum_{i=1}^{\ell} \gamma_i}{n-\sum{\gamma'_i}}  \right)
  \le\frac{
  \mathbb{P}(\rho_n\in\mathfrak{S}_{n,g}){(n-\ell-\ell')!}}{ (n-\ell-\sum_{i=1}^\ell\gamma_i-\sum_{i=1}^{\ell'} \gamma'_i)!}.
\end{align*}
\end{enumerate}


\end{lemma}

\begin{proof}

Let $n \ge v  :=\ell + \sum_{i=1}^\ell\gamma_i + \sum_{i=1}^{\ell'} \gamma'_i$ and $g \in \mathbb G^n$ be a graph of class $ \mathcal C_{\boldsymbol\gamma, \boldsymbol\gamma'}.$
By conjugation invariance, one can assume without loss of generality that
$g=g_{\boldsymbol\gamma, \boldsymbol\gamma'}$ as in the definition of $ \mathcal C_{\boldsymbol\gamma, \boldsymbol\gamma'}$ above and that the non-isolated vertices are $\llbracket  v \rrbracket.$
We denote by  
\[ F:= \{ y=(y_i)_{\ell + \ell'+1 \le i \le v}, y_i \in \{\ell + \ell'+1, \ldots, n\}, \textrm{pairwise distincts}  \}\]
and for any $y \in F,$ we denote by $g_y$ the graph isomorphic to $g$ obtained from $g$ by fixing the vertices $1, \ldots, \ell+\ell'$ and, for any $i \in \{ \ell + \ell'+1, \ldots, v\},$ replacing $i$ by $y_i.$ 

It is easy to check that, for $y, y' \in F,$ if $y \neq y',$ we have $ \mathfrak{S}_{n,g_y} \bigcap \mathfrak{S}_{n,g_{y'}} = \emptyset.$ From there, we easily get the first bound of point 1.:
$$
  \mathbb P(\sigma_n \in \mathfrak{S}_{n,g})  = \frac{ \mathbb P\left(\sigma_n \in \bigcup_{y \in F} \mathfrak{S}_{n,g_y}\right)}{\textrm{card} F} 
  \leq  \frac{\mathbb{P}(\sigma_n\in A^ {\boldsymbol\gamma'})}{\frac{(n-\ell -\ell')!}{(n-v)!}}.
$$

For the second inequality of point 1., we write 

\begin{align*}
\mathbb E\left(\prod_{i=1}^{\ell'} \gamma'_i \left({\#_{\gamma'_i} \sigma_{n}}\right)\right)
=\sum_{(i_1,\dots,i_{\ell'}) \in \llbracket n\rrbracket^{\ell'}} \mathbb E\left(\prod_{j=1}^{\ell'} \mathrm{1}_{\{ c_{i_j}(\sigma_n)=\gamma'_i\}} \right)
&\geq \sum_{(i_1,\dots,i_{\ell'}) \in  \llbracket n\rrbracket^{\ell'} \text{pairwise distinct}} \mathbb E\left( \prod_{j=1}^{\ell'} \mathrm{1}_{\{ c_{i_j}(\sigma_n)=\gamma'_i\}} \right)
\\&= \frac{n!}{(n-\ell')!}\mathbb E\left(\prod_{j=1}^{\ell'} \mathrm{1}_{\{c_{j}(\sigma_n)=\gamma'_i\}} \right) 
\\&\geq \frac{n!}{(n-\ell')!} \mathbb P\left(\sigma_n\in A^{\gamma'}\right).
\end{align*}

 


We now go to the bound in the uniform case  and define :
\[ B^{\boldsymbol\gamma, \boldsymbol\gamma'}:=A^{\boldsymbol\gamma'}\bigcap \left(\bigcap_{i=1}^{\ell} \bigcap_{j=1}^{\gamma_i} \left\{ \sigma \in \mathfrak{S}_{n}, \sigma^j(i+\ell') > \ell+\ell'\right\}\right)\]
\[ C^{\boldsymbol\gamma, \boldsymbol\gamma'}:=A^{\boldsymbol\gamma'}\bigcap \left(\bigcap_{i=1}^{\ell} \bigcap_{j=1}^{\gamma_i} \left\{ \sigma \in \mathfrak{S}_{n}, \sigma^j(i+\ell') > \ell'\right\}\right)
=A^{\boldsymbol\gamma'}\bigcap \left(\bigcap_{i=1}^{\ell'} \bigcap_{j=1}^{\gamma'_i-1} \left\{ \sigma \in \mathfrak{S}_{n}, \sigma^j(i) > \ell+\ell'\right\}\right)
.\]
By observing that for $g=g_{\boldsymbol\gamma, \boldsymbol\gamma'}$,
\[  \mathbb P(\sigma_n \in \mathfrak{S}_{n,g} | \sigma_n \in \mathfrak{S}_{n} \setminus B^{\boldsymbol\gamma, \boldsymbol\gamma'}) =0,\]
and
\[ \mathbb P(\sigma_n \in \mathfrak{S}_{n,g} | \sigma_n \in B^{\boldsymbol\gamma, \boldsymbol\gamma'}) = \frac{(n-v)!}{(n-\ell -\ell')!},\]
we obtain that
\[ \mathbb P(\sigma_n \in \mathfrak{S}_{n,g} )= \mathbb{P} (\sigma_n \in B^{\boldsymbol\gamma, \boldsymbol\gamma'})  \frac{(n-v)!}{(n-\ell -\ell')!}.\]

Moreover, by definition, $ B^{\boldsymbol\gamma, \boldsymbol\gamma'} \subset  C^{\boldsymbol\gamma,\boldsymbol\gamma'} \subset A^{\boldsymbol\gamma'} $ and then
\[\mathbb{P}(\sigma_n\in  B^{\boldsymbol\gamma, \boldsymbol\gamma'})= \mathbb{P}(\sigma_n\in  B^{\boldsymbol\gamma, \boldsymbol\gamma'}|\sigma_n\in  C^{\boldsymbol\gamma, \boldsymbol\gamma'}) 
\mathbb{P}(\sigma_n\in   C^{\boldsymbol\gamma, \boldsymbol\gamma'}|\sigma_n\in A^{\boldsymbol\gamma'}){\mathbb{P}(\sigma_n\in  A^{\boldsymbol\gamma'})}
.\]
For any conjugation invariant permutation, 
\[
\mathbb{P}(\sigma_n\in  C^{\gamma,\gamma'}|\sigma_n\in A^{\gamma'})\geq 1-\sum_{i=1}^{\ell'}\sum_{j=1}^{\gamma'_i-1}\sum_{p=1}^\ell \mathbb{P}\left(\sigma^j_n(i)=p+\ell'|\sigma_n\in A^{\gamma'}\right)= 1-\sum_{i=1}^{\ell'}\frac{\ell(\gamma'_i-1)}{n-\ell'}
.\]
When  $\rho_n$ is a uniform random permutation, then conditionally to $\{\rho_n \in C^{\gamma',\gamma}\}$,
the restriction of $\rho_n$ to $\llbracket n\rrbracket\setminus{\{\sigma^j(i), 1\leq i\leq \ell,1\leq j\leq \gamma^\prime_i\}}$ is uniform. Therefore,
\[
\mathbb{P}(\rho_n\in  B^{\boldsymbol\gamma, \boldsymbol\gamma'}|\rho_n\in  C^{\boldsymbol\gamma, \boldsymbol\gamma'}) \geq 
1-\frac{\ell\sum_{i=1}^{\ell} \gamma_i}{n-\sum{\gamma'_i}}.
\]
This concludes the proof of the lower bound in the uniform case.
\end{proof}

\section{Asymptotics of fixed points}

For readability reasons, before proving the general statements, we will prove the convergence of the number of fixed points under weaker assumptions. The statement is the following :
\begin{theorem}\label{th:general}
Let  $w$ be a word in  $F_k$ with canonical form $w= x_{k_r}^{\varepsilon_r}\ldots x_{k_1}^{\varepsilon_1},$  where $k_1 \neq k_r$ (there is at least two different letters). For $j\le k,$ we denote by $r_j:=\mathrm{card}\{i, k_i=j\}.$ We write $w= \Omega^d,$ with   $\Omega$ which is not a power and $d \ge 1.$  

Let $\sigma_{1,n}, \ldots, \sigma_{k,n}$ be independent random permutations, with conjugation invariant distributions satisfying the following assumptions:
\begin{equation}\label{hyp:main}
\forall j \in \llbracket k\rrbracket, \forall p \in \mathbb N^*, c_1, \ldots, c_p \in   \widehat{\mathcal N}_j(w)   \textrm{ such that } \sum_{i=1}^p c_i\le r_j,
\lim_{n\to\infty} \mathbb E\left( \prod_{i=1}^p \left(\frac{\#_{c_i} \sigma_{j,n}}{\sqrt{n}}\right)\right) =0
\end{equation}
and
\begin{equation}\label{hyp:main_2}
\forall j \in \llbracket k\rrbracket, \forall i \in \mathcal N_j(w), \lim_{n\to\infty} \mathbb E\left(\frac{\#_{i} \sigma_{j,n}}{{n}}\right) =0.
\end{equation}
Then we have that
\[\mathbb E\left(\#_1 w(\sigma_{1,n}, \ldots, \sigma_{k,n})\right) \xrightarrow[n \rightarrow \infty]{} \psi(d), \]
where we recall that $\psi(d)$ is the numbers of divisors of the integer $d.$
\end{theorem}

In particular, in the case when $d=1,$ Theorem \ref{th:general} reads :

\begin{corollary} \label{corollary}

Let  $\Omega\in F_k$ with canonical form $\Omega= x_{k_r}^{\varepsilon_r}\ldots x_{k_1}^{\varepsilon_1} \neq 1 $ with $k_1 \neq k_r,$  and which  is not a power.

Under Assumptions \eqref{hyp:main} and \eqref{hyp:main_2}, we have that
\[\mathbb E\left(\#_1 \Omega(\sigma_{1,n}, \ldots, \sigma_{k,n})\right) \xrightarrow[n \rightarrow \infty]{}  1. \]

\end{corollary}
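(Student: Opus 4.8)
The plan is to compute $\mathbb{E}(\#_1 w(\sigma_{1,n},\ldots,\sigma_{k,n}))$ by summing over fixed points and exploiting conjugation invariance. Since conjugating all the $\sigma_{j,n}$ by one and the same permutation conjugates $w(\sigma_{1,n},\ldots,\sigma_{k,n})$ and leaves its law unchanged, the word $w(\sigma_{1,n},\ldots,\sigma_{k,n})$ is itself conjugation invariant; hence $\mathbb{P}[w(\sigma)(x)=x]$ does not depend on $x$ and
\[
\mathbb{E}(\#_1 w(\sigma_{1,n},\ldots,\sigma_{k,n})) = n\,\mathbb{P}[w(\sigma)(1)=1].
\]
I would then expand this probability over \emph{trajectories}: setting $a_0=1$ and $a_i=\sigma_{k_i,n}^{\varepsilon_i}(a_{i-1})$, the event $\{w(\sigma)(1)=1\}$ is the disjoint union over $(a_1,\ldots,a_{r-1})\in\{1,\ldots,n\}^{r-1}$, with $a_r=a_0=1$, of the events that the $r$ constraints $\sigma_{k_i,n}^{\varepsilon_i}(a_{i-1})=a_i$ all hold. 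Grouping the constraints according to which letter $j$ they involve and using independence, the probability of each trajectory factorizes as $\prod_{j=1}^k \mathbb{P}[\sigma_{j,n}\supseteq\pi_j]$, where $\pi_j$ is the partial permutation that the trajectory forces on $\sigma_{j,n}$.

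The second ingredient is a set of single-permutation estimates for a conjugation invariant $\sigma_{j,n}$. Writing $\pi_j$ as a disjoint union of open paths and closed cycles, I would show that an open path of length $p$ is realized with probability $n^{-p}(1+o(1))$ (leading coefficient $1$), while a closed cycle of length $\lambda$ is realized with probability $\frac{\lambda\,\mathbb{E}(\#_\lambda\sigma_{j,n})}{n^\lambda}(1+o(1))$; both rest on conjugation invariance, which makes the probability of a prescribed partial structure depend only on its isomorphism type, and on \eqref{hyp:main} for the boundedness of the relevant moments. Consequently a trajectory whose induced graph has $V$ distinct vertices and $E$ distinct edges (constraints) contributes, after multiplication by the global factor $n$ and by the $\sim n^{V-1}$ choices of its vertices, a quantity of order $n^{V-E}$ times a factor bounded by a product of short-cycle moments of the $\sigma_{j,n}$, hence controlled by \eqref{hyp:main}. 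Since the trajectory is a closed walk, $V-E=1-\beta$ with $\beta\ge 0$ the first Betti number of the induced graph, so only $\beta=1$ contributes at order $1$ while $\beta\ge 2$ is negligible.

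The combinatorial heart is then to identify the $\beta=1$ configurations, and here the canonical (reduced) form of $w$ together with the hypothesis $k_1\neq k_r$ is decisive. Reducedness forbids the walk from retracing an edge at consecutive steps, so the induced walk is non-backtracking; this rules out any tree or pendant tail and forces the underlying graph to be a single cycle traversed consistently. The assumption $k_1\neq k_r$ guarantees that $w$ is cyclically reduced at the seam $a_r=a_0$, so this cycle is multicoloured and no single letter closes a monochromatic cycle. A non-backtracking closed walk of length $r$ on a single cycle $C_m$ must wind around it $d/\ell$ times for some $\ell\mid d$ with $m=\ell s$, where $s=r/d$ is the length of $\Omega$; these are precisely the trajectories for which $a_0$ lies on an $\ell s$-cycle, i.e. on an $\ell$-cycle of $\Omega(\sigma)$. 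For each such $\ell\mid d$ the cycle is multicoloured, so every letter sees only open paths and the leading coefficient is $1$, giving a contribution $n\cdot n^{\ell s-1}\cdot n^{-\ell s}=1$; summing over the divisors of $d$ yields $\psi(d)$.

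The step I expect to be the main obstacle is making the single-permutation estimates and the error bounds uniform in $n$: for non-uniform conjugation invariant permutations the probability of realizing a partial structure is genuinely governed by the short-cycle statistics, and the negligible ($\beta\ge 2$, or monochromatic-cycle) configurations produce exactly the joint moments $\mathbb{E}(\prod_i \#_{c_i}\sigma_{j,n})$ with $\sum_i c_i\le r_j$ appearing in \eqref{hyp:main}. Turning the combinatorial bookkeeping into such products and invoking \eqref{hyp:main} to keep them bounded --- and checking that the leading coefficient for open paths is exactly $1$ rather than merely $O(1)$ --- is where the real work lies; the divisor count $\psi(d)$ then drops out of the clean $\beta=1$ enumeration. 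The corollary is the special case $d=1$, for which the only surviving configuration is the single $r$-cycle and the limit is $\psi(1)=1$.
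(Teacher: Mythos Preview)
Your sketch is correct and follows the same architecture as the paper's proof of Theorem~\ref{th:general} (of which the Corollary is simply the case $d=1$, $\psi(1)=1$): expand $n\,\mathbb{P}[w(\boldsymbol\sigma)(1)=1]$ over trajectories, factor over letters by independence, and control each factor by single-permutation estimates for conjugation-invariant laws --- your open-path and closed-cycle bounds are exactly the content of Lemma~\ref{lem:straight}, and the moments you isolate at the end are exactly those appearing in~\eqref{hyp:main}.

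The one genuine difference is how the contributing configurations are identified. The paper does \emph{not} carry out the direct $\beta=1$ classification you outline. Instead, in Proposition~\ref{prop:uniform} it imports the uniform result (Proposition~\ref{prop:pp}, due to Nica) as a black box: knowing that the total in the uniform case equals $\psi(d)$, and showing by a lower bound that each of the $\psi(d)$ ``winding'' trajectory types contributes at least~$1$, it deduces that they contribute exactly~$1$ and all other types contribute~$0$. The passage to general conjugation-invariant $\sigma_{j,n}$ is then a comparison argument via Lemma~\ref{lem:straight}. Your route --- non-backtracking forces no pendant tails, so $\beta=1$ means a single cycle; cyclic reducedness from $k_1\neq k_r$ forces the cycle to be multicoloured, so every $\pi_j$ is a union of open paths; periodicity of the colour sequence along the cycle yields the divisors of $d$ --- is more direct and self-contained, and would in fact re-derive the uniform case rather than assume it. Both approaches rely on the same technical estimates to suppress $\beta\ge 2$ and to get leading coefficient exactly~$1$ on open paths, and both invoke~\eqref{hyp:main} at precisely the place you indicate.
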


We start with the identification of the graphs that are contributing to the limit in the uniform case :

\begin{proposition} \label{prop:uniform}
Let $w = \Omega^d$ be a word  such that  $\Omega$ is not a power. We write the reduced form of $\Omega$ as $x_{\alpha_\ell}^{\beta_\ell}\ldots  x_{\alpha_1}^{\beta_1},$ with $\alpha_\ell \neq \alpha_1.$     Assume that $\rho_{1,n}, \ldots, \rho_{k,n}$ are independent and \textbf{uniformly distributed} on $\mathfrak S_n.$ Then,  for any $\hat{g}_1,\hat{g}_2,\dots \hat{g}_k \in  \widehat{\mathbb{G}},$
\[\lim_{n\to\infty} n\mathbb{P}\left( \forall i \in \llbracket  k \rrbracket, \widehat{\mathcal{G}}^{1,w}_{i}(\rho_{1,n}, \ldots, \rho_{k,n})= \hat{g}_i,  c_1(w(\rho_{1,n}, \ldots, \rho_{k,n}))=1 \right)= \sum_{h| d}\prod_{i=1}^k  \mathbbm{1}_{\{\hat{g}_i=\mathcal {T}_{h\cdot\gamma^i(\Omega)}\}}, \]
where the sum runs over all divisors $h$ of $d$ and $ \forall i \in \llbracket  k \rrbracket, $  $\gamma^i(\Omega) = (|\beta_j|)_{\alpha_j=i}.$ 
\end{proposition}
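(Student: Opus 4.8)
The plan is to read the left-hand side as a first-moment (expected-count) statement and then to perform a degrees-of-freedom count over the possible shapes of the trajectory of a single point. Since $\rho_{1,n},\dots,\rho_{k,n}$ are uniform, the joint law of $(g_{\rho_{1,n}},\dots,g_{\rho_{k,n}})$ is invariant under relabelling of $\{1,\dots,n\}$, so $n\,\mathbb P\big(\forall i,\ \widehat{\mathcal G}^{1,w}_i(\rho)=\hat g_i,\ c_1(w(\rho))=1\big)=\mathbb E\big[\#\{m:\ w(\rho)(m)=m,\ \widehat{\mathcal G}^{m,w}_i(\rho)=\hat g_i\ \forall i\}\big]$. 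I would study one trajectory $m=i_0^{m,w},\dots,i_r^{m,w}$, with $r=dL$ and $L:=\sum_{j=1}^\ell|\beta_j|$, by conditioning on its \emph{coincidence pattern}: the isomorphism type of the union graph $G:=\bigcup_{i=1}^k\mathcal G^{m,w}_i(\rho)$ together with the way $w$ traverses it. Writing $V$ for the number of non-isolated vertices and $E:=\sum_i|E_{\mathcal G^{m,w}_i}|$ for the total number of edges of a pattern, a first-moment estimate gives a contribution of order $n^{V-E}$: the starting vertex being fixed, the remaining vertices can be assigned in $\asymp n^{V-1}$ ways, and by independence of the $\rho_{i,n}$ the probability that all prescribed edges are present factorises as $\prod_i\mathbb P(\rho_{i,n}\in\mathfrak S_{n,g_i})\asymp\prod_i n^{-|E_{g_i}|}=n^{-E}$, where $g_i$ is the realized $i$-th graph (of class $\mathcal C_{\boldsymbol\gamma,\boldsymbol\gamma'}$, so that this scaling is the content of Lemma~\ref{lem:straight}); the extra factor $n$ from exchangeability then yields $n^{V-E}$.

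The core of the proof is to show that the patterns with $V=E$ are exactly the injective closures. As the trajectory is a closed walk and $G$ is connected, $G$ always contains a cycle, so $E\ge V$, with $V=E$ forcing $G$ to be unicyclic. The assumption $\alpha_1\neq\alpha_\ell$ guarantees that $\Omega^{h}$ is a reduced word for every $h$ (no simplification occurs at the junctions between consecutive copies of $\Omega$), so the walk never backtracks immediately; I would then prove that any coincidence among the $i^{m,w}_j$ beyond the forced return $i^{m,w}_r=m$ strictly increases the first Betti number $E-V+1$, by creating either a second independent cycle or a loop or a repeated edge. This dichotomy --- that every genuine coincidence costs at least one unit of $E-V$, so that only injective walks survive --- is the main obstacle, and is the analogue here of the vertex/edge count carried out in \cite{KaMa20}. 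Granting it, write $\tau:=\Omega(\rho)$: for an injective closure the macro-points $\tau^a(m)$ are pairwise distinct, the condition $i^{m,w}_r=m$ forces the length $h$ of the $\tau$-cycle of $m$ to divide $d$, and the periodicity $i^{m,w}_{j+hL}=i^{m,w}_j$ identifies the distinct edges with those of the $\Omega^{h}$-trajectory. Since $\alpha_1\neq\alpha_\ell$ keeps consecutive blocks in different generators and $\Omega$ is not a power, the $i$-th graph then consists of exactly $h$ disjoint straight segments of each length listed in $\gamma^i(\Omega)$; that is, $\mathcal G^{m,w}_i(\rho)$ is of class $\mathcal T_{h\cdot\gamma^i(\Omega)}$.

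It then remains to evaluate the surviving constants and to discard the rest. For a fixed divisor $h\mid d$ the injective configuration has $hL$ distinct vertices (one of them fixed to the start) and, for each $i$, a graph $g_i$ of class $\mathcal T_{h\cdot\gamma^i(\Omega)}$ with $s_i:=h\sum_{j:\alpha_j=i}|\beta_j|$ edges, so that $\sum_i s_i=hL$. Choosing the free vertices in $(n-1)(n-2)\cdots(n-hL+1)$ ways and applying Lemma~\ref{lem:straight} with $\ell'=0$ to each factor, independence gives $\mathbb P(\rho_{i,n}\in\mathfrak S_{n,g_i})=\tfrac{(n-\ell_i-s_i)!}{(n-\ell_i)!}\,(1+o(1))$ with a number of factors bounded by $dL$, so all error terms are uniform; multiplying the vertex count, these probabilities and the exchangeability factor $n$ yields a limit equal to $1$ for each $h$. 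Distinct divisors produce non-isomorphic targets (the number of segments scales with $h$), so a given $(\hat g_i)$ is matched by at most one $h$, and the $V<E$ patterns, of which there are finitely many since $r=dL$ is fixed, each contribute $n^{V-E}\to 0$. Summing the injective contributions therefore gives exactly $\sum_{h\mid d}\prod_{i=1}^k\mathbbm 1_{\{\hat g_i=\mathcal T_{h\cdot\gamma^i(\Omega)}\}}$, which is the claimed limit.
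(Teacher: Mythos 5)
Your overall scheme --- exchangeability to turn $n\,\mathbb P(\cdots)$ into an expected count of fixed points with prescribed graphs, then a first-moment classification of coincidence patterns by the exponent $V-E$ --- is sound, and your final bookkeeping for the surviving configurations is correct. But the step you yourself flag as ``the main obstacle'' is not only left unproved, it is misstated in a way that matters. You claim that any coincidence among the $i^{m,w}_j$ beyond the forced return $i^{m,w}_r=m$ strictly increases $E-V+1$, ``so that only injective walks survive''; this is contradicted by the very configurations you need two sentences later: when the trajectory winds $d/h$ times around a cycle spelling $\Omega^h$ with $h<d$, there are many coincidences ($i^{m,w}_{j+hL}=i^{m,w}_j$ for all $j$, with $L$ the length of $\Omega$) and yet $E=V$. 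The correct dichotomy is: either $E\ge V+1$, so the pattern contributes $O(n^{V-E})\to0$, or the union multigraph is a bare cycle traversed repeatedly in a single direction --- the latter because a non-backtracking closed walk cannot live on a tree, nor make a nontrivial excursion into a pendant tree of a unicyclic graph, both consequences of $w=\Omega^d$ being cyclically reduced ($\alpha_1\neq\alpha_\ell$). Even then one must show that the winding period is a multiple of $L$, and this is precisely where the hypothesis that $\Omega$ is not a power enters (if $\Omega=\Theta^s$ the walk could wind around a shorter cycle spelling $\Theta$, producing $\psi(sd)$ surviving classes instead of $\psi(d)$); your sketch invokes ``$\Omega$ is not a power'' only for the shape of the individual graphs, not for this point. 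Since this classification is essentially the whole content of Nica's theorem, the proposal as written re-derives Proposition \ref{prop:pp} while assuming its hardest step.

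The paper takes a genuinely different and much shorter route that avoids the classification altogether. It uses Proposition \ref{prop:pp} as a black box: the total mass $\sum_{(\hat g_i)} n\,\mathbb P(\forall i,\ \widehat{\mathcal G}^{1,w}_i=\hat g_i,\ c_1(w(\boldsymbol\rho))=1)$ converges to $\psi(d)$, and Lemma \ref{lem:straight} together with an explicit choice of $ht-1$ distinct intermediate vertices yields only the lower bound $\liminf_n n\,\mathbb P(\cdots)\ge 1$ for each of the $\psi(d)$ classes $(\mathcal T_{h\cdot\gamma^i(\Omega)})_i$, $h\mid d$. Since $\psi(d)$ nonnegative terms, each with $\liminf$ at least $1$, sit inside a sum converging to $\psi(d)$, each must converge to $1$ and every other class must converge to $0$. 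If you want to keep your self-contained argument you must supply the combinatorial lemma described above; otherwise, invoking Proposition \ref{prop:pp} reduces the proof to the lower bound you have already established.
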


\begin{proof} 
We first recall that for any random permutation $\tau_n,$
\[\mathbb{E}(\#_1(\tau_n))=\mathbb{E}\left(\sum_{i=1}^n  \mathbbm{1}_{\{c_i(\tau_n)=1\}}\right)=\sum_{i=1}^n\mathbb{E}(\mathbbm{1}_{\{c_i(\tau_n)=1\}})=\sum_{i=1}^n\mathbb{P}({c_i(\tau_n)=1})\]
and if $\tau_n$ is conjugation invariant then $\mathbb{P}({c_i(\tau_n)=1})=\mathbb{P}({c_1(\tau_n)=1})$
 and 
\[\mathbb{E}(\#_1(\tau_n))=n\mathbb{P}({c_1(\tau_n)=1}).\]
 
 According to Proposition \ref{prop:pp},  for $\rho_{1,n}, \ldots, \rho_{k,n}$ independent and uniformly distributed,  
 $$\lim_{n \rightarrow \infty}\mathbb{E}(\#_1 w(\rho_{1,n}, \ldots, \rho_{k,n})) = \psi(d).$$ This is equivalent to the following: 
\[ \lim_{n\to\infty} \sum_{ (\hat{g}_i)_{1\le i \le  k}  } n\mathbb{P}\left( \forall i \in \llbracket  k \rrbracket, \widehat{\mathcal{G}}^{1,w}_{i}(\rho_{1,n}, \ldots, \rho_{k,n})= \hat{g}_i , c_1(w(\rho_{1,n}, \ldots, \rho_{k,n}))=1 \right)=\psi(d). \]

Our candidates for being the main contributions are the $k$-tuples of classes $(\mathcal {T}_{h \cdot\gamma^1(\Omega)}, \ldots, \mathcal {T}_{h \cdot\gamma^k(\Omega)})$ with $\gamma^i(\Omega) = (|\beta_j|)_{\alpha_j=i}$  and $h|d.$ As there are $\psi(d)$ such $k$-tuples,  one  only needs to show that 
\[\lim_{n\to\infty} n\mathbb{P}\left( \forall i \in \llbracket  k \rrbracket, \widehat{\mathcal{G}}^{1,w}_{i}(\rho_{1,n}, \ldots, \rho_{k,n})= \mathcal {T}_{h \cdot\gamma^i(\Omega)} , c_1(w(\rho_{1,n}, \ldots, \rho_{k,n}))=1 \right) \ge 1.\]

We denote by $t:=\sum_{i=1}^\ell|\beta_i|$  the total length of the word $\Omega$ and $r=td$ the length of $w$. Let $h|d$ and, for $n$ large enough,  let $j_1,j_2, \ldots, j_{ht-1}$ be $ht-1$ indices in $\{2, \ldots, n\}$ which are two by two distinct.




Let $\boldsymbol \sigma := (\sigma_1, \ldots, \sigma_k)$ be such that

 \[ i_0^{1, w}(\boldsymbol\sigma)=1,  i_1^{1, w}(\boldsymbol\sigma)= j_1 , , \ldots, i_{ht-1}^{1, w}(\boldsymbol\sigma)=j_{ht-1}, i_{ht}^{1, w}(\boldsymbol\sigma)=1.\]
By construction, $\forall 1\leq  i\leq k$, 
$ \widehat{\mathcal{G}^{1,w}_{i}}(\boldsymbol\sigma)= \mathcal {T}_{h \cdot\gamma^i(\Omega)}.$
To simplify the notations, for any $i \in \llbracket k\rrbracket,$ we denote by $g_i:= \mathcal{G}^{1,w}_{i}(\boldsymbol\sigma).$
By Lemma \ref{lem:straight1} and independence, we have the following lower bound :
\begin{multline*}
 \mathbb P\left( \forall i  \in \llbracket k\rrbracket, \rho_{i,n} \in  \mathfrak{S}_{n,g_i}\right) \\ \ge  \prod_{i=1}^k \frac{(n-h\ell_i-h\sum_{j=1}^{\ell_i} \gamma_j^i(\Omega))!}{(n-h\ell_i)!}  \left(1- h\sum_{j=1}^{\ell_i} \mathbb{P}(c_1(\rho_{n,i}) \le  \gamma_j^i(\Omega)) - \frac{h\ell_i-1}{n-1}  h\sum_{j=1}^{\ell_i} \gamma_j^i(\Omega) \right),  
\end{multline*}
where $\ell_i$ is the number of non-trivial connected components of $\gamma^i(\Omega).$

From there, we get that 
\begin{multline*}
    \mathbb{P}\left( \forall i \in \llbracket  k \rrbracket, \widehat{\mathcal{G}}^{1,w}_{i}(\rho_{1,n}, \ldots, \rho_{k,n})= \mathcal {T}_{h\cdot\gamma^i(\Omega)} , c_1(w(\rho_{1,n}, \ldots, \rho_{k,n}))=1 \right) \\ 
    \ge \prod_{i=1}^k \frac{(n-h\ell_i-h\sum_{j=1}^{\ell_i} \gamma_j^i(\Omega))!}{(n-h\ell_i)!}  \left(1- \sum_{j=1}^{\ell_i} \mathbb{P}(c_1(\rho_{n,i}) \le  \gamma_j^i(\Omega)) - \frac{h\ell_i-1}{n-1}  h\sum_{j=1}^{\ell_i} \gamma_j^i(\Omega) \right)  \frac{(n-1)!}{(n-ht)!} 
\end{multline*}

Since, for any $i \in \llbracket k\rrbracket,$  $\sum_{j=1}^{\ell_i} \mathbb{P}(c_1(\rho_{i,n}) \le  \gamma_j^i(\Omega)) + \frac{h\ell_i-1}{n-1} h \sum_{j=1}^{\ell_i} \gamma_j^i(\Omega) \to 0,$  and we get that
\begin{multline*}
 \liminf_{n\to\infty} n\mathbb{P}\left( \forall i \in \llbracket  k \rrbracket, \widehat{\mathcal{G}}^{1,w}_{i}(\rho_{1,n}, \ldots, \rho_{k,n})= \mathcal {T}_{h\cdot\gamma^i(\Omega)}, c_1(w(\rho_{1,n}, \ldots, \rho_{k,n}))=1 \right)\\
 \geq \liminf_{n\to\infty}  n \frac{1}{n^{h\sum_{i=1}^k\sum_{j=1}^{\ell_i} \gamma_j^i(\Omega) }}n^{ht-1} \geq 1,
\end{multline*}
where the last inequality comes from the fact that 
$ \sum_{i=1}^k\sum_{j=1}^{\ell_i} \gamma_j^i(\Omega) = t.$
This concludes the proof.

\end{proof}

We are now ready for the proof of Theorem \ref{th:general}.

\begin{proof}
It is enough to show that, under the assumptions \eqref{hyp:main} and \eqref{hyp:main_2}, 
 for any $\hat{g}_1,\hat{g}_2,\dots \hat{g}_k \in  \widehat{\mathbb{G}},$
\[\lim_{n\to\infty} n\mathbb{P}\left( \forall i \in \llbracket  k \rrbracket, \widehat{\mathcal{G}}^{1, w}_{i}(\sigma_{1,n}, \ldots, \sigma_{k,n})= \hat{g}_i , c_1(w(\sigma_{1,n}, \ldots, \sigma_{k,n}))=1 \right)=\sum_{h|d}\prod_{i=1}^k \mathbbm{1}_{\{\hat{g}_i=\mathcal {T}_{h\cdot\gamma^i(\Omega)}\}}. \]

Let  $g$ be  of class $\mathcal T_{\boldsymbol\gamma},$ with $\boldsymbol\gamma=(\gamma_1,\gamma_2,\dots,\gamma_\ell).$ 
Then, by  Lemma \ref{lem:straight1}, for any random permutation $\sigma_n$ with conjugation invariant distribution on $\mathfrak{S}_n$ and $\rho_n$ uniformly distributed   
we have
\begin{equation}\label{eq:straightmaj}
\mathbb P(\sigma_n \in \mathfrak{S}_{n,g}) \le \frac{1}{1-\frac{1}{n}\sum_{i=1}^\ell {\gamma_i} - \frac{\ell-1}{n-1}\sum_{i=1}^\ell {\gamma_i}  }\mathbb P(\rho_n \in \mathfrak{S}_{n,g}).
\end{equation}
Therefore, for any $\varepsilon >0,$ 
 there exists $n_0$ such that 
for any $n\geq n_0$,

\begin{equation}
\mathbb P(\sigma_n \in \mathfrak{S}_{n,g}) \le  (1+\varepsilon) \mathbb P(\rho_n \in \mathfrak{S}_{n,g})  
.\end{equation}
Similarly, for any random permutation $\sigma_n$ with conjugation invariant distribution on $\mathfrak{S}_n$ satisfying Assumption \eqref{hyp:main_2}, we have 
\begin{equation} \label{eq:straightmin}
\mathbb P(\sigma_n \in \mathfrak{S}_{n,g}) \ge \left(1-\sum_{i=1}^\ell \mathbb P(c_1(\sigma_n)=i) - \frac{\ell-1}{n-1}\sum_{i=1}^\ell {\gamma_i}  \right) \mathbb P(\rho_n \in \mathfrak{S}_{n,g})\ge  (1-\varepsilon) \mathbb P(\rho_n \in \mathfrak{S}_{n,g}).
\end{equation}

Therefore, if for any  $i \in \llbracket k\rrbracket,$ there exists $\gamma^i$ such that $\hat g_i =  \mathcal T_{\gamma^i},$
 from Proposition \ref{prop:uniform}, we get that
 \[\lim_{n\to\infty} n\mathbb{P}\left( \forall i \in \llbracket  k \rrbracket, \widehat{\mathcal{G}}^{1, w}_{i}(\sigma_{1,n}, \ldots, \sigma_{k,n})= \hat{g}_i , c_1(w(\sigma_{1,n}, \ldots, \sigma_{k,n}))=1 \right)=\sum_{h|d}\prod_{i=1}^k \mathbbm{1}_{\{\hat{g}_i=\mathcal {T}_{h \cdot\gamma^i(w)}\}}. \]

 We now want  to show that graphs containing loops do not contribute. Let $g$ be a fixed (non-colored) graph. If $h$ is a colored graph such that $\dot h = g,$ then $\hat h$ belongs to a finite set of classes of colored graphs, with cardinal independent of $n.$
 Therefore, if we fix some classes of colored graphs   $(\hat{h}_i)_{1\leq i\leq k}$ such that one of the components is not straight, it is enough to show that
 
 \begin{align} \label{colored_limit}
 \lim_{n\to\infty} n\mathbb{P}\left( \forall i \in \llbracket  k \rrbracket, \widehat{\mathcal{H}}^{1, w}_{i}(\sigma_{1,n}, \ldots, \sigma_{k,n})= \hat{h}_i , c_1(w(\sigma_{1,n}, \ldots, \sigma_{k,n}))=1 \right) =0 .    
 \end{align}
  
 One can  assume that there exists $(\sigma_1,\dots,\sigma_k)$ such that  $c_1(w(\sigma_{1,n}, \ldots, \sigma_{k,n}))=1$ and  $\forall i \in \llbracket  k \rrbracket,$
 $\widehat{\mathcal{H}}^{1, w}_{i}(\sigma_{1}, \ldots, \sigma_{k})= \hat{h}_i$, otherwise \eqref{colored_limit} is trivial. 
 To simplify the notations, we denote in the sequel $h_i:=\mathcal{H}^{1, w}_{i}(\sigma_{1}, \ldots, \sigma_{k}),$ for any $i \in \llbracket  k \rrbracket.$
 
Let $\widetilde{h}_i$ be the same graph as $h_i$ after removing one edge from each cycle having at least two red vertices.
In particular, $\widetilde{h}_i$ and ${h}_i$ have the same set of non-trivial vertices. We denote by $\mathcal{C}^{\gamma^i,{\gamma'}^i}$  the class of $\dot{\widetilde{h}}_i,$ $\ell_i$ and $\ell'_i$ being the number of non-zero components of $\gamma^i$ and $\gamma'^i$ respectively.
Let $\mathcal V_i$ be the set of non-trivial vertices of $h_i$ and $r_i$ is the number of red vertices of $h_i.$

A direct consequence of Lemma~\ref{graph_geometry} is that

\begin{align*}
    \mathrm{card}\left(\bigcup_{i=1}^k \mathcal V_i\right) \leq      \sum_{i=1}^k\left( \mathrm{card}(\mathcal V_i) - r_i/2\right)  \leq \sum_{i=1}^k \sum_{j=1}^{\ell_i} \gamma^i_j+\sum_{j=1}^{\ell'_i}\left({\gamma_j'}^i-1/2\right)=:deg.
\end{align*}

  There is at most $O(n^{deg-1})$ possible $k$-tuples of graphs $(h'_i,\dots,h'_k)$ such that $\forall i\in \llbracket  k \rrbracket,$ $\widehat{h'_i}=\widehat{h_i}$ and  there exists $(\sigma_1,\dots,\sigma_k)$ satisfying 
$\forall i \in \llbracket  k \rrbracket,$
 $\widehat{\mathcal{H}}^{1, w}_{i}(\sigma_{1}, \ldots, \sigma_{k})= \hat{h'_i}$ and  $c_1(w(\sigma_{1}, \ldots, \sigma_{k}))=1$. Indeed, one need to choose at most $deg-1$ non-trivial vertices in $\llbracket2,n \rrbracket$ ($1$ is a non-trivial vertex), and then choose their positions (not all choices are admissible).

 We have 
 \begin{align*} 
 \mathbb{P}\Big( \forall i \in \llbracket  k \rrbracket, \widehat{\mathcal{H}}^{1, w}_{i}(\sigma_{1,n}, \ldots, \sigma_{k,n})= \hat{h}_i & ,  c_1(w(\sigma_{1,n}, \ldots, \sigma_{k,n}))=1 \Big) \\
  &\leq O(n^{deg-1})\mathbb{P}\left(\forall i \in \llbracket k\rrbracket, \mathcal{H}^{1, w}_{i}(\sigma_{1,n}, \ldots, \sigma_{k,n})= {h}_i \right) \\
  &\leq O(n^{deg-1})  \mathbb{P}\left(\forall i \in \llbracket k\rrbracket, \sigma_{i,n} \in  \mathfrak S_{n,{\dot{h}}_i} \right) \\
 &\leq O(n^{deg-1})  \prod_{i=1}^k\mathbb{P}\left(\sigma_{i,n} \in  \mathfrak S_{n,{\dot{h}}_i} \right)
 \end{align*}
 
 By Lemma \ref{lem:straight2}, we have 
\begin{equation}\label{eq:contruct}
    \mathbb{P}{\left( \sigma_{i,n} \in   \mathfrak S_{n,{\dot{h}}_i} \right)} \leq  \mathbb{P}\left( \sigma_{i,n} \in   \mathfrak S_{n,\dot{\widetilde{h}}_i} \right)= O\left(n^{-\sum_{j=1}^{\ell_i}\gamma^i_j-\sum_{j=1}^{\ell'_i}{\gamma'_j}^i + \ell'_i}\right) \mathbb{P}(\sigma_{1,n}\in A^{\boldsymbol{\gamma'}^i}).
\end{equation}


Moreover, when $\ell'_i \neq 0,$ by Lemma~\ref{graph_geometry}, for any $j \le \ell'_i$,  $\gamma'^i_j | \beta_\ell$ for some $\ell$ such that $\alpha_\ell=i.$ Consequently,
using \eqref{calculdirect} and Assumption \eqref{hyp:main},  
 \begin{align*}
     \mathbb{P}(\sigma_{i,n}\in A^{{\gamma'}^i}) \leq o(n^{\frac{\ell'_i}{2}}) O(n^{-\ell'_i})= o(n^{-\frac{\ell'_i}{2}})
 \end{align*}
 Therefore, if there exists $i \le k$ such that $\ell'_i \neq 0,$ then \eqref{colored_limit} holds.

We now consider the case when $\forall i \le k,$ $\ell'_i =0.$ It means that every cycle of the $h_i$'s has at least two red vertices. In this case, by Lemma~\ref{graph_geometry}, we have 
\begin{equation*}
    \mathrm{card}\left(\bigcup_{i=1}^k \mathcal V_i\right) \leq      \sum_{i=1}^k\left( \mathrm{card}(\mathcal V_i) - r_i/2\right)  \leq \sum_{i=1}^k \sum_{j=1}^{\ell_i} \gamma^i_j = deg.
\end{equation*}
The second inequality is strict as soon as one component has at least three red vertices.
In this case, we trivially  bound $  \mathbb{P}(\sigma_{i,n}\in A^{{\gamma'}^i})$ by 1 in 
\eqref{eq:contruct} to conclude the proof.

If all the components have at most two red vertices, then, by Lemma ~\ref{graph_geometry}, the components are straight or a cycle of length $|\beta_\ell|+|\beta_{\ell'}|$ for some $\ell$ and $\ell'$ or a cycle of length $j < |\beta_\ell|$ for some $\ell$ such that $\alpha_\ell =i,$ for some $1\le i \le k.$
 In this case, one can conclude similarly to the preceding case, using Assumption \eqref{hyp:main} to conclude.

\end{proof}

\section{ Proof of Theorem  \ref{th:conv_dis} and Corollary \ref{corollary_dis}}

\label{sec:proof2}

Let us first introduce some more notations. Let $\lambda = (\lambda_1, \ldots, \lambda_m)$ be a Young diagram, that is a sequence of nonnegative integers in decreasing order $\lambda_1 \geq \ldots \geq \lambda_m \geq 0.$ It is customary to represent them with $\lambda_i$ empty boxes on row number $i.$ 
For two Young diagrams $\lambda, \mu,$ we say that $\mu \subset \lambda$ if $\mu$ can be obtained from $\lambda$ by removing some rows of $\lambda.$
We denote by $\ell(\lambda)$ the number of non-empty rows of $\lambda$ and $|\lambda| = \sum_{i=1}^{\ell(\lambda)}\lambda_i$ the number of boxes of the diagram.

For example, $\mu = (3,1) = {\tiny\yng(3,1)} \subset  {\tiny\yng(3,3,1)} = (3,3,1)= \lambda $

For any pair $(\lambda,  \mu)$ of Young diagrams such that  $\mu \subset \lambda,$ for any $n \ge \ell(\lambda),$ an admissible filling of type $(\lambda, \mu, n)$ is a filling of $\mu$ such that 
\begin{itemize}
\item all entries are in $\llbracket n \rrbracket$ and are pairwise distinct,
    \item the entries in the first column are increasing, 
    \item in every row, the first entry is the smallest entry,
    \item $\forall i \le \ell(\lambda),$ $i$ is in a row of $\mu$ of length $\lambda_i.$
\end{itemize}

\noindent
Let $K_{\lambda,\mu,n}$ be the number of admissible fillings of type $(\lambda,\mu,n)$. For example, 
\begin{itemize}
    \item $K_{(3,1),(1),n}=0$,  because $1$ should be in a row of $\mu$ of length 3.
    \item $K_{(2,2,2,1),(2,1),n}=0$,  because 1, 2 and 3 should go in the first row of $\mu,$ which has only two boxes.
    \item $K_{(3,3,1),(3,1),n}=2 (n-3) $ because the only admissible fillings in this case are of the form  
    $\young(12j,3)$ and  $\young(1j2,3)$ for some $4\le j\le n$. 
\end{itemize}

An important remark is that, for any Young diagram $\lambda,$ we obviously have $\lambda \subset \lambda$ and, for any $n \ge |\lambda|,$
 $$K_{\lambda,\lambda,n}=\frac{(n-\ell({\lambda}))!}{(n-| \lambda|)!}, $$
because $1, 2 , \ldots , \ell({\lambda})$ should go in the first position of each row of $\lambda$ and we then complete freely the $|\lambda | - \ell({\lambda})$ remaining boxes
with distinct numbers between $\ell({\lambda})+1$ and  $n.$ 
By a similar argument, for any $\mu\subset\lambda$, there exists $C_{\lambda,\mu}$ such that for any $n\geq |\lambda|$,
\begin{equation}\label{fillingnb}
    K_{\lambda,\mu,n}=C_{\lambda,\mu}
\frac{(n-\ell({\lambda}))!}{(n-| \mu|)!}
=C_{\lambda,\mu}n^{|\mu|-\ell(\lambda)}(1+o(1)).
\end{equation}

Let us now go to the proof of   Theorem  \ref{th:conv_dis} itself. The scheme is similar to the proof of Theorem \ref{th:general}, except we have to follow simultaneously 
the trajectory of several starting points $1, 2, \ldots, m.$ As we will see, a crucial point is that it is enough to consider the case when they lie in different cycles.
We introduce the following event : for a Young diagram $\lambda,$ $j \le \ell(\lambda),$ $k_1, \ldots, k_j$ distinct integers in $\llbracket  \ell(\lambda)\rrbracket,$ we denote by
\begin{multline*}
A_{k_1, \ldots, k_j}^{\lambda,\Omega} := \left\{\boldsymbol\sigma \in (\mathfrak S_n)^k, \forall i \le j,  c_{k_i}(\Omega(\boldsymbol\sigma))= \lambda_{k_i} \right. \\
\left. \textrm{ and } 
 \forall i \neq j, k_i \textrm{ and } k_j \textrm{ lie in pairwise distinct cycles of  } \Omega(\boldsymbol\sigma)\right\}.
 \end{multline*}

It is an extension of the event $A^{\gamma'}$ defined in Lemma \ref{lem:straight2}, in the sense that if $\lambda$ is a Young diagram with rows $\gamma^\prime_1, \ldots, \gamma^\prime_\ell$ in decreasing order then 
$A_{1, \ldots, \ell(\lambda)}^{\lambda,x_1} = A^{\gamma'}.$\\

The first key step is the following lemma :

\begin{lemma} \label{Expectation-probability-conversion}
Let $\sigma_n$ and $\rho_n$ be two conjugation invariant random permutations in $\mathfrak{S}_n.$  Then the following two properties are equivalent :

\begin{enumerate}
    \item 
For any Young diagram $\mu$, $ \lim_{n\to\infty} n^{\ell(\mu)}  \mathbb P\left( \sigma_n \in  A_{1, \ldots, \ell(\mu)}^{\mu,x_1}  \right) = \lim_{n\to\infty} n^{\ell(\mu)} \mathbb{P}\left(\rho_n \in  A_{1, \ldots, \ell(\mu)}^{\mu,x_1}   \right) .$

\item  For any $P\in \mathbb{R}[x_1,x_2,\dots,x_{d'}],$ monomial, $$\lim_{n \rightarrow \infty } \mathbb E\left( P(\#_1 \sigma_n, \ldots, \#_{d'} \sigma_n)\right) = \lim_{n \rightarrow \infty } \mathbb E\left( P(\#_1 \rho_n, \ldots, \#_{d'} \rho_n)\right).$$ 

\end{enumerate}

\end{lemma}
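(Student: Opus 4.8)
The plan is to establish the equivalence by relating both quantities to a common combinatorial object, namely the number of admissible fillings $K_{\lambda,\mu,n}$ introduced just before the lemma. The key observation is that $A_{1,\ldots,\ell(\mu)}^{\mu,x_1}$ is precisely the event that, for the single permutation $\sigma_n$ (since $\Omega=x_1$ means $\Omega(\boldsymbol\sigma)=\sigma_n$), the points $1,2,\ldots,\ell(\mu)$ lie in distinct cycles of respective lengths $\mu_1,\ldots,\mu_{\ell(\mu)}$. By conjugation invariance, the probability of this event is controlled by counting falling factorials: the number of cycles of length $\ell$ in a permutation is $\#_\ell\sigma_n=\frac{1}{\ell}\sum_{i}\mathbbm{1}_{c_i(\sigma_n)=\ell}$, and products of the form $\prod_\ell (\#_\ell\sigma_n)^{\downarrow a_\ell}$ (falling factorials) count ordered tuples of points lying in distinct cycles of prescribed lengths. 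The first step, therefore, is to write down explicitly the linear change of variables, over $\mathbb{R}$, between the family of monomials $\{\prod_\ell \#_\ell^{a_\ell}\}$ and the family of falling-factorial products, and to match the latter with the events $A_{1,\ldots,\ell(\mu)}^{\mu,x_1}$ ranging over Young diagrams $\mu$.

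Concretely, I would proceed as follows. First I would show the direction (1)$\Rightarrow$(2): by conjugation invariance, for any Young diagram $\mu$ with parts having multiplicities $(a_\ell)_\ell$ (so $a_\ell$ is the number of parts equal to $\ell$), one has
\begin{equation}
\mathbb{E}\left(\prod_{\ell\ge 1}\left(\#_\ell\sigma_n\right)^{\underline{a_\ell}}\right) = \frac{1}{\prod_\ell a_\ell!\,\ell^{a_\ell}}\,\binom{n}{\ell(\mu)}^{-1}\cdot(\text{combinatorial factor})\cdot n^{\ell(\mu)}\,\mathbb{P}\left(\sigma_n\in A_{1,\ldots,\ell(\mu)}^{\mu,x_1}\right)(1+o(1)),
\end{equation}
where the falling factorial $x^{\underline{a}}=x(x-1)\cdots(x-a+1)$ counts ordered selections. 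The point is that $n^{\ell(\mu)}\mathbb{P}(\sigma_n\in A^{\mu,x_1}_{1,\ldots,\ell(\mu)})$ converges by hypothesis (1), so each mixed falling-factorial moment converges to the same limit as for $\rho_n$. Since falling-factorial products and ordinary monomials in $\#_1,\ldots,\#_{d'}$ span the same space over $\mathbb{R}$ (the change of basis is given by Stirling numbers and is invertible), convergence of all falling-factorial moments is equivalent to convergence of all monomial moments, giving (2). The reverse direction (2)$\Rightarrow$(1) runs the same linear algebra backwards: from convergence of all monomial moments I recover convergence of all mixed falling-factorial moments, and for a fixed $\mu$ the quantity $n^{\ell(\mu)}\mathbb{P}(\sigma_n\in A_{1,\ldots,\ell(\mu)}^{\mu,x_1})$ is, up to the explicit convergent prefactor above, exactly one such falling-factorial moment, so its limit is forced to agree with that of $\rho_n$.

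The main obstacle I anticipate is bookkeeping the precise combinatorial prefactors relating the falling-factorial moment $\mathbb{E}(\prod_\ell(\#_\ell\sigma_n)^{\underline{a_\ell}})$ to the probability $\mathbb{P}(\sigma_n\in A^{\mu,x_1}_{1,\ldots,\ell(\mu)})$, and in verifying that these prefactors depend only on $\mu$ (through the multiplicities $a_\ell$) and not on the law of $\sigma_n$, so that they cancel identically between $\sigma_n$ and $\rho_n$. The subtlety is that a falling-factorial moment counts \emph{ordered} tuples of points in distinct cycles of prescribed lengths, whereas $A^{\mu,x_1}_{1,\ldots,\ell(\mu)}$ pins down the specific points $1,\ldots,\ell(\mu)$; bridging these requires using conjugation invariance to symmetrize over the choice of which points are selected, and carefully handling the repeated part-lengths (the $\prod_\ell a_\ell!$ and $\prod_\ell \ell^{a_\ell}$ factors). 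One must also confirm that the $o(1)$ error terms are uniform enough that taking $n\to\infty$ commutes with the finite linear combinations defining the change of basis; this is immediate here since for each fixed monomial only finitely many diagrams $\mu$ contribute. Once the prefactor is pinned down as a universal constant, the equivalence follows from pure linear algebra.
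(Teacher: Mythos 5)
Your proof is correct and rests on the same underlying idea as the paper's -- a universal, triangular, invertible linear relation between the monomial moments and the quantities $n^{\ell(\mu)}\mathbb P\bigl(\sigma_n\in A^{\mu,x_1}_{1,\ldots,\ell(\mu)}\bigr)$ -- but you package it differently. The paper expands each monomial moment as a sum over tuples of indices, groups them by coalescence pattern (producing the partition counts $\mathcal P(p_j,t_j)$), and then splits the event $\{\forall i\le\ell(\pi),\ c_i(\sigma_n)=\pi_i\}$ according to which base points share a cycle via the admissible-filling counts $K_{\pi,\mu,n}/K_{\mu,\mu,n}$; the inverse direction is then obtained by induction on $\ell(\lambda)$. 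You instead use falling factorials, for which there is an \emph{exact} identity: if $\mu$ has $a_\ell$ parts equal to $\ell$ and $s=\ell(\mu)$, then marking one point in each cycle of an ordered tuple of distinct cycles gives
\[
\Bigl(\textstyle\prod_{i=1}^{s}\mu_i\Bigr)\,\mathbb E\Bigl(\prod_{\ell\ge1}(\#_\ell\sigma_n)^{\underline{a_\ell}}\Bigr)
= n(n-1)\cdots(n-s+1)\;\mathbb P\bigl(\sigma_n\in A^{\mu,x_1}_{1,\ldots,s}\bigr),
\]
by conjugation invariance (exchangeability of the marked points). This pushes all the coalescence bookkeeping into the standard Stirling change of basis between monomials and falling-factorial products, which is finite and universal, so both implications follow at once without the paper's induction. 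Two small corrections to your sketch: the prefactor is exactly $n^{\underline{\,\ell(\mu)}}/\prod_i\mu_i$ -- there is no $\prod_\ell a_\ell!$ (the falling factorial already counts \emph{ordered} tuples of cycles, matching the ordered sum over marked points), and the identity is exact rather than $(1+o(1))$. With the prefactor pinned down, your argument is complete and, if anything, slightly cleaner than the paper's.
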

\begin{proof}
For any $m \in \mathbb N^*,$ $p_1, \ldots, p_m \in \mathbb N,$
\begin{align*}
    a_{p_1, \ldots, p_m}^n &:= \mathbb E \left( \left(\sum_{i=1}^n \mathbf 1_{\{c_i(\sigma_n)=1\}}\right)^{p_1}\ldots \left(\sum_{i=1}^n \mathbf 1_{\{c_i(\sigma_n)=m\}}\right)^{p_m}\right) \\
    & = \sum_{\tiny \begin{array}{rcl}
   1 \le i_1^1,&\ldots,&i_{p_1}^1 \\
     i_1^m,&\ldots,&i_{p_m}^m\le n
     \end{array}
    } \mathbb P\left( \forall j \le m, \forall s \le p_j, c_{i_s^j}(\sigma_n)=j\right).
\end{align*}
We denote by $\lambda$  the Young diagram $(m,\dots,m,\dots,1,\dots,1)$ where $j$ appears  $p_j$ times. 
For any $j \le m,$ let us denote by $t_j := \textrm{card}\{i_1^j,  \ldots, i_{p_j}^j\} \le p_j$ the number of distinct indices among $i_1^j,  \ldots, i_{p_j}^j$ and $\pi$ the Young tableau $(m,\dots,m,\dots,1,\dots,1)$ where $j$ appears $t_j$ times so that $\ell(\pi)=t_1+\dots+t_m.$. Note that $\pi\subset\lambda.$

Then, by conjugation invariance of the distribution, we have, 
\[ \mathbb P\left(\forall j \le m, \forall s \le p_j, c_{i_s^j}(\sigma_n)=j\right) = \mathbb P\left(\forall j \le m, \forall s \textrm{ such that } 1+ \sum_{ \ell=1}^{j-1} t_\ell \le s \le \sum_{ \ell=1}^{j} t_\ell , c_s(\sigma_n)=j\right),\]
so that
\begin{align*}
  a_{p_1, \ldots, p_m}^n & = \sum_{\pi\subset\lambda}n^{ t_1+ \ldots+t_m}(1+o(1))\left(\prod_{j=1}^m \mathcal{P}(p_j,t_j)\right) \mathbb P\left( \forall i \le  t_1+ \ldots+t_m, c_i(\sigma_n)=\pi_i\right)\\
  & = \sum_{\pi\subset\lambda} n^{ \ell(\pi)}(1+o(1)) \left(\prod_{j=1}^m \mathcal{P}(p_j,t_j)\right)\mathbb P\left( \forall i \le  \ell(\pi), c_i(\sigma_n)=\pi_i\right),
\end{align*}
where $ \mathcal{P}(p,t)$ is the number of partitions of $p$ with $t$ parts.\\


We now denote by $B_\pi=\{\sigma\in\mathfrak{S}_n,\forall i \leq \ell(\pi), c_i(\sigma)=\pi_i\}.$

$\forall\sigma\in B_\pi$, we define  $f_\pi(\sigma)$ as follows : 
let $\pi_\sigma$ be the Young diagram following the cycle structure of $\sigma$ restricted to the cycles having at least one element in $\llbracket \ell(\pi)\rrbracket$. Then $f_\pi(\sigma)$ is the filling  of $\pi_\sigma$ with the elements of the cycles of $\sigma$ starting with the smallest element in each cycle and so that the first column is increasing. 

For example, for $\sigma= (1,7,8)(9,3,2)(4,6)(10,13,6)(12,11)$ and $\pi=(3,3,3,2)$,
$f_\pi(\sigma)=\young(178,293,46).$ \\

Note that for $n$ large enough, $f_\pi(\sigma)$ is necessarily an admissible filling  of type $(\pi,\pi_\sigma,n)$. We have then, 
$$\mathbb{P}(\sigma_n\in B_\pi)=\sum_{\mu\subset\pi} \sum_{f \text{ admissible filling of type } (\pi,\mu,n)}\mathbb{P}(f_\pi(\sigma_n)=f).$$ 
By conjugation invariance, for any admissible filling of type $(\pi,\mu,n)$,

$$\mathbb{P}(f_\pi(\sigma_n)=f) \frac{(n-\ell(\mu))!}{(n-|\mu|)!}=\mathbb{P}(\sigma_n\in A_{1,\dots,\ell(\mu)}^{\mu,x_1})$$


so that 
\[  \mathbb P\left( \forall i \le  \ell(\pi), c_i(\sigma_n)=\pi_i\right) = \sum_{\mu \subset \pi} \frac{K_{\pi, \mu,n}}{K_{\mu, \mu,n}} \mathbb P\left(\sigma_n\in A_{1, \ldots, \ell( \mu)}^{\mu,x_1} \right).\]

\begin{align*}
  a_{p_1, \ldots, p_m}^n &= (1+o(1))
 \left (  n^{ \ell(\lambda)}\mathbb P\left(\sigma_n\in A_{1, \ldots, \ell( \lambda)}^{\lambda,x_1}\right) +\sum_{\mu \subset \lambda, \lambda\neq\mu} C_{\lambda_\mu}\frac{n^{ \ell(\lambda)}K_{\lambda, \mu,n}}{K_{\mu, \mu,n}} \mathbb P\left(\sigma_n\in A_{1, \ldots, \ell( \mu)}^{\mu,x_1} \right)\right)
 \\&= (1+o(1))
 \left (  n^{ \ell(\lambda)}\mathbb P\left(\sigma_n\in A_{1, \ldots, \ell( \lambda)}^{\lambda,x_1}\right) +\sum_{\mu \subset \lambda, \lambda\neq\mu} C'_{\lambda,\mu}n^{\ell(\mu)} \mathbb P\left(\sigma_n\in A_{1, \ldots, \ell( \mu)}^{\mu,x_1} \right)\right),
\end{align*}
where in the last equality, we have used \eqref{fillingnb}.
By replacing $\sigma_n$ by $\rho_n$ in the previous calculus, the first implication is direct and the other implication can be obtained by iteration on $\ell(\lambda).$

\end{proof}

We give now the counterpart of Proposition \ref{prop:uniform}.

\begin{proposition} 
Let $\Omega$ a word  which is not a power\color{black}. Suppose that its reduced form  $x_{\alpha_\ell}^{\beta_\ell}\ldots  x_{\alpha_1}^{\beta_1},$ satisfies $\alpha_\ell \neq \alpha_1.$     Assume that $\rho_{1,n}, \ldots, \rho_{k,n}$ are independent and \textbf{uniformly distributed} on $\mathfrak S_n.$ Then,  for any $\hat{g}_1,\hat{g}_2,\dots \hat{g}_k \in  \widehat{\mathbb{G}},$ for any Young diagram $\mu$,
\begin{multline}\label{Prop:mais_graphs_moments}
    \lim_{n\to\infty} n^{\ell(\mu)}\mathbb{P}\left( \forall i \in \llbracket  k \rrbracket, \widehat{\mathcal{G}}^{\llbracket \ell(\mu)\rrbracket, \Omega}_{i}(\rho_{1,n}, \ldots, \rho_{k,n})= \hat{g}_i \text{ and } (\rho_{1,n}, \ldots, \rho_{k,n}) \in A_{1, \ldots, \ell(\mu)}^{\mu,\Omega}  \right)\\ =\prod_{i=1}^k \mathbbm{1}_{\{\hat{g}_i=\mathcal {T}_{|\mu|\cdot\gamma^i(\Omega)}\}},   
\end{multline} 
where we recall that  $|\mu|=\sum_{j=1}^{\ell(\mu)} \mu_j.$ 
Consequently, if
 $\sigma_{1,n}, \ldots, \sigma_{k,n}$ are independent, with conjugation invariant distribution satisfying Assumptions \eqref{hyp:prim} and \eqref{hyp:prim_2}, then, for any  Young diagram $\mu,$  we have that 
\begin{equation}\label{eq:general-comp}
    \mathbb P\left((\sigma_{1,n}, \ldots, \sigma_{k,n}) \in  A_{1, \ldots, \ell(\mu)}^{\mu,\Omega}  \right) = \mathbb P\left((\rho_{1,n}, \ldots, \rho_{k,n} ) \in  A_{1, \ldots, \ell(\mu)}^{\mu,\Omega}  \right) (1+o(1)).
\end{equation}

\end{proposition}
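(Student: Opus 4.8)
The plan is to establish the two displayed assertions in turn, deducing the comparison \eqref{eq:general-comp} from the uniform identity \eqref{Prop:mais_graphs_moments} together with Lemma \ref{lem:straight} and the hypothesis \eqref{hyp:prim}, in the same way that Theorem \ref{th:general} was obtained from Proposition \ref{prop:uniform}. The conceptual point is that being in $A_{1,\ldots,\ell(\mu)}^{\mu,\Omega}$ means following the $\ell(\mu)$ disjoint orbits of the starting points $1,\ldots,\ell(\mu)$ under $\Omega(\boldsymbol\sigma)$, the orbit of $m$ closing after exactly $\mu_m$ applications of $\Omega$. Summing the lengths gives $|\mu|$ applications in total, which explains why the single class $\mathcal T_{|\mu|\cdot\gamma^i(\Omega)}$ appears rather than a sum over divisors as in Proposition \ref{prop:uniform}: here the cycle lengths are pinned down by $\mu$ instead of being constrained only to divide $d$.

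For the uniform identity I would repeat the argument of Proposition \ref{prop:uniform}, but tracking the $\ell(\mu)$ trajectories simultaneously. Writing $t$ for the length of $\Omega$, the combined orbits visit $|\mu|t$ positions, of which $\ell(\mu)$ are the prescribed starting points. For the lower bound I fix pairwise distinct values for the remaining positions; there are $(1-o(1))\,n^{|\mu|t-\ell(\mu)}$ such choices, each forcing the coordinate graphs to be of class $\mathcal T_{|\mu|\cdot\gamma^i(\Omega)}$ by construction, and Lemma \ref{lem:straight} together with independence bounds the probability of each configuration below by $(1-o(1))\,n^{-|\mu|t}$, the orbits carrying $|\mu|t$ edges in all. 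Multiplying and scaling by $n^{\ell(\mu)}$ gives $\liminf\ge 1$. The matching $\limsup\le 1$ follows from the upper estimate in Lemma \ref{lem:straight} applied to the same count. Finally, any configuration in which two of the $|\mu|t$ positions coincide loses at least one free vertex and so contributes $o(n^{-\ell(\mu)})$; in the generic all-distinct case, the hypothesis $\alpha_1\neq\alpha_\ell$ ensures that at each junction between consecutive applications of $\Omega$ the incoming and outgoing edges sit on different permutations, so that no coordinate graph closes into a cycle and each is forced to equal exactly $\mathcal T_{|\mu|\cdot\gamma^i(\Omega)}$. This shows the limit is $0$ for every other choice of $(\hat{g}_i)_{1\le i\le k}$ and proves \eqref{Prop:mais_graphs_moments}.

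To reach \eqref{eq:general-comp}, I would decompose $\mathbb P(\boldsymbol\sigma\in A_{1,\ldots,\ell(\mu)}^{\mu,\Omega})$ according to the classes $(\widehat{\mathcal G}^{\{1,\ldots,\ell(\mu)\},\Omega}_i)_{1\le i\le k}$. Summing the right-hand side of \eqref{Prop:mais_graphs_moments} over all $(\hat{g}_i)_{1\le i\le k}$ equals $1$, whence $n^{\ell(\mu)}\mathbb P(\boldsymbol\rho\in A_{1,\ldots,\ell(\mu)}^{\mu,\Omega})\to 1$, and it suffices to prove the same limit for $\boldsymbol\sigma$. For the straight classes the two-sided estimates \eqref{eq:straightmaj} and \eqref{eq:straightmin} force the ratio of the conjugation-invariant to the uniform contribution to tend to $1$, so the dominant term converges to $1$ as well. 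For the classes $\mathcal C_{\boldsymbol\gamma,\boldsymbol\gamma'}$ with $\ell'>0$, which arise when an orbit self-intersects and creates loops or cycles, the bound \eqref{eq:straightmajloops} combined with \eqref{hyp:prim}, giving $\mathbb P(\sigma_{i,n}\in A^{\boldsymbol\gamma'})\le C\,n^{-\ell'}$, shows that their total contribution is $o(n^{-\ell(\mu)})$, exactly as in the proof of Theorem \ref{th:general}. Summing yields $n^{\ell(\mu)}\mathbb P(\boldsymbol\sigma\in A_{1,\ldots,\ell(\mu)}^{\mu,\Omega})\to 1$ and hence \eqref{eq:general-comp}.

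The hard part is twofold. Combinatorially, one must check that configurations with coincidences among the $|\mu|t$ positions, within or across orbits, are subleading, and that in the generic case the coordinate graphs are forced to be precisely $\mathcal T_{|\mu|\cdot\gamma^i(\Omega)}$; this is where the two hypotheses on $\Omega$ (not a power, and $\alpha_1\neq\alpha_\ell$) are used, the latter to prevent any coordinate graph from closing into a cycle at an orbit junction. Analytically, and more delicately, one must justify exchanging the limit in $n$ with the sum over the infinitely many classes of $\widehat{\mathbb G}$: this needs an $n$-uniform, summable domination of the contributions, which the free-vertex count supplies for the straight classes and which \eqref{hyp:prim} supplies for the looped classes $\mathcal C_{\boldsymbol\gamma,\boldsymbol\gamma'}$, ensuring that the negligible terms are negligible uniformly rather than merely term by term.
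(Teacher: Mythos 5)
Your handling of the lower bound for the main term and of the deduction of \eqref{eq:general-comp} from \eqref{Prop:mais_graphs_moments} (via the estimates \eqref{eq:straightmaj}, \eqref{eq:straightmin}, \eqref{eq:straightmajloops} and \eqref{hyp:prim}) matches the paper. The divergence, and the gap, is in how you kill all the other graph classes in the uniform statement. The paper never analyses coincidences directly: it first combines Proposition \ref{prop:pp} with Lemma \ref{Expectation-probability-conversion} to get that the \emph{total} mass $n^{\ell(\mu)}\,\mathbb P\bigl(\boldsymbol\rho_n \in A_{1,\ldots,\ell(\mu)}^{\mu,\Omega}\bigr)$ converges to $1$, computes $\mathbb P(\rho_{1,n}\in A_{1,\ldots,\ell(\mu)}^{\mu,x_1})=(n-\ell(\mu))!/n!$ explicitly, and then observes that since the single term indexed by $(\mathcal T_{|\mu|\cdot\gamma^i(\Omega)})_i$ already has $\liminf\ge 1$, conservation of mass forces that term to converge to exactly $1$ and every other term to vanish. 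This is a soft argument that outsources the hard combinatorics to Nica's theorem.

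Your route instead claims directly that ``any configuration in which two of the $|\mu|t$ positions coincide loses at least one free vertex and so contributes $o(n^{-\ell(\mu)})$.'' As a general principle this is false: a coincidence costs a factor $n$ in the number of configurations, but it can also merge two edges of a coordinate graph, and since $\mathbb P(\rho_n\in\mathfrak S_{n,g})\asymp n^{-\mathrm{card}(E_g)}$ under the uniform law, losing an edge gains a factor $n$ per configuration. The two effects can cancel exactly --- this is precisely what happens when $\Omega$ is a proper power, and it is why $\psi(d)>1$ in Proposition \ref{prop:pp}. Showing that for a non-power the cancellation never occurs is the combinatorial core of Nica's proof, and your one-sentence justification does not supply it; you flag it as ``the hard part'' but that is exactly the part that must be proved if you refuse the paper's shortcut. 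Two smaller remarks: the sum over classes in $\widehat{\mathbb G}$ that actually occur is finite (each $\mathcal G_i$ has at most $|\mu|t$ edges), so no uniform-in-$n$ domination over infinitely many classes is needed; and when $\alpha_1=\alpha_\ell$ the generic all-distinct configuration does not ``close into a cycle'' --- the in- and out-edges at each starting point would lie in the same coordinate graph and merge two straight components, so the class would still be some $\mathcal T_{\boldsymbol\gamma}$, just not $\mathcal T_{|\mu|\cdot\gamma^i(\Omega)}$.
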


\begin{proof}
A direct consequence of Proposition \ref{prop:pp} and Lemma~\ref{Expectation-probability-conversion} is that for any $\Omega$ which is not a power,
\[
 \mathbb{P}\left(  (\rho_{1,n}, \ldots, \rho_{k,n}) \in A_{1, \ldots, \ell(\mu)}^{\mu,\Omega}  \right) =
 \mathbb{P}\left(  \rho_{1,n}\in A_{1, \ldots, \ell(\mu)}^{\mu,x_1}  \right)  (1+o(1)).  \] 
Since,  when $n>|\mu|$,
\[
 \mathbb{P}\left(  \rho_{1,n}\in A_{1, \ldots, \ell(\mu)}^{\mu,x_1}  \right) =  \frac{(n-\ell(\mu))!}{n!}, \]
 we have, 
 
  \[\lim_{n\to\infty} n^{\ell(\mu)}\mathbb{P}\left(  (\rho_{1,n}, \ldots, \rho_{k,n}) \in A_{1, \ldots, \ell(\mu)}^{\mu,\Omega}  \right)=1. \]

  If $r$ is the total length of the word $\Omega$ written under its canonical form and  $t=r\cdot|\mu|-\ell(\mu)$,
  let $j_1, \ldots, j_t$ be $t$ indices in $\{\ell(\mu)+1, \ldots, n\}$ which are two by two distinct. Let   $(g_1,\dots,g_k)$  be equal to  $(\mathcal G_1^{\llbracket \ell(\mu)\rrbracket, \Omega}(\boldsymbol\sigma),\dots,\mathcal G_k^{\llbracket \ell(\mu)\rrbracket, \Omega}(\boldsymbol\sigma))$ when

 \[ i_0^{1, \Omega^{\mu_1 }}(\boldsymbol\sigma)=1,  i_1^{1, \Omega^{\mu_1 }}= j_1 , , \ldots, i_{\mu_1r}^{1,  \Omega^{\mu_1 }}(\boldsymbol\sigma)=1,\]
 \[ i_0^{2,  \Omega^{\mu_2 }}(\boldsymbol\sigma)=2,  i_1^{2, \Omega^{\mu_2 }} =j_{\mu_1 r} ,  \ldots, i_{\mu_2r}^{2,  \Omega^{\mu_1 }}(\boldsymbol\sigma)=2,
 \]
 \[\dots\]
 \[ i_0^{\ell(\mu),  \Omega^{\mu_{\ell(\mu)} }}(\boldsymbol\sigma)=\ell(\mu),  i_1^{\ell(\mu), \Omega^{\mu_{\ell(\mu)} }}(\boldsymbol\sigma)=j_{1+\sum_{i=1}^{\ell(\mu)-1}(r\cdot\mu_i -1)} , \ldots, i_{\mu_{\ell(\mu)}r}^{1, \Omega^{\mu_{\ell(\mu)} }}(\boldsymbol\sigma)=\ell(\mu).\]

For such a $k$-tuple of graphs, by Lemma \ref{lem:straight2} and independence, we have the following lower bound :
\begin{multline*}
 \mathbb P\left( \forall i  \in \llbracket k\rrbracket, \rho_{i,n} \in  \mathfrak{S}_{n,g_i}\right) \\ \ge  \prod_{i=1}^k \frac{(n-|\mu|\ell_i-|\mu|\sum_{j=1}^{\ell_i} \gamma_j^i(\Omega))!}{(n-|\mu|\ell_i)!}  \left(1- |\mu|\sum_{j=1}^{\ell_i} \mathbb{P}(c_1(\rho_{n,i}) \le  \gamma_j^i(\Omega)) - \frac{|\mu|\ell_i-1}{n-1}  |\mu|\sum_{j=1}^{\ell_i} \gamma_j^i(\Omega) \right),  
\end{multline*}
where $\ell_i$ is the number of non-trivial connected components of $\gamma^i(\Omega).$

From there, as in proof of proposition \ref{prop:uniform}, we get that 
\[ \liminf_{n\to \infty}n^{\ell(\mu)}\mathbb{P}\left( \forall i \in \llbracket  k \rrbracket, \widehat{\mathcal{G}}^{\llbracket \ell(\mu)\rrbracket, \Omega}_{i}(\rho_{1,n}, \ldots, \rho_{k,n})= \mathcal {T}_{|\mu|\cdot\gamma^i(\Omega)}\text{ and } (\rho_{1,n}, \ldots, \rho_{k,n}) \in A_{1, \ldots, \ell(\mu)}^{\mu,\Omega}  \right) \geq 1.
\]
This concludes the proof of \eqref{Prop:mais_graphs_moments}. Using similar arguments as in the proof of Theorem \ref{th:general}, we get that if $\sigma_n$ satisfies Assumptions \eqref{hyp:prim} and \eqref{hyp:prim_2}, then  \eqref{eq:general-comp} holds.

 \end{proof}

We can now conclude the proof of Theorem \ref{th:conv_dis} itself.

\begin{proof}
We assume that $w= \Omega^d,$ with $d \ge 1$ and $\Omega$ is not a power. Let $d' \in \mathbb N^*$ be fixed. If $\boldsymbol \sigma_n := (\sigma_{1,n}, \ldots, \sigma_{k,n}) $
satisfy \eqref{hyp:prim} and  $\boldsymbol \rho_n := (\rho_{1,n}, \ldots, \rho_{k,n}) $ are independent and uniformly distributed, we want to show that \eqref{eq:general-comp} implies that for any monomial $P,$ 
\begin{equation}\label{eq:monomial-comp}
\lim_{n \rightarrow \infty } \mathbb E\left( P(\#_1 w(\boldsymbol \sigma_n), \ldots, \#_{d'} w(\boldsymbol \sigma_n))\right) = \lim_{n \rightarrow \infty } \mathbb E\left( P(\#_1 w(\boldsymbol \rho_n), \ldots, \#_{d'} w(\boldsymbol \rho_n))\right) .
\end{equation}
Indeed, it was proved in \cite{Nica} that the right handside depends only on $d$ and $P.$
The first remark is that, for any monomial $P,$ there exists a polynomial $Q,$ such that for any fixed $ \boldsymbol\sigma := (\sigma_{1}, \ldots, \sigma_{k}),$
\[ P(\#_1 w(\boldsymbol \sigma), \ldots, \#_{d'} w(\boldsymbol \sigma))
= Q (\#_1 \Omega(\boldsymbol \sigma), \ldots, \#_{dd'} \Omega(\boldsymbol \sigma)).\]
Indeed, for any $j \in\mathbb N^*,$

\begin{align*}
    \#_1 w(\boldsymbol \sigma)^j & = \sum_{i=1}^n \mathbf 1_{w(\boldsymbol \sigma)^j(i)=i} = \sum_{i=1}^n \mathbf 1_{c_i(w(\boldsymbol \sigma)))| j} \\
    & = j   \#_j w(\boldsymbol \sigma) + \sum_{r|j, r\neq j}  r  \#_r w(\boldsymbol \sigma).
\end{align*}
On the other hand, 
\[   \#_1 w(\boldsymbol \sigma)^j =   \#_1 \Omega(\boldsymbol \sigma)^{dj} =  \sum_{r|dj}  r  \#_r \Omega(\boldsymbol \sigma).\]
Therefore, by induction, for any $j \in \mathbb N^*,$ $ \#_j w(\boldsymbol \sigma)$ can be expressed as a linear combination of $\{ \#_r \Omega(\boldsymbol \sigma) \}_{r|dj}$
and it is enough to show Theorem \ref{th:conv_dis}  in the particular case when $d=1$.
As a consequence of Lemma  \ref{Expectation-probability-conversion}, we have that, if for any Young diagram $\mu$, $$ \lim_{n\to\infty} n^{\ell(\mu)}  \mathbb P\left( \Omega(\boldsymbol\sigma_n) \in  A_{1, \ldots, \ell(\mu)}^{\mu,x_1}  \right) = \lim_{n\to\infty} n^{\ell(\mu)} \mathbb{P}\left(\Omega(\boldsymbol\rho_n) \in  A_{1, \ldots, \ell(\mu)}^{\mu, x_1}   \right),$$
then for any $P\in \mathbb{R}[x_1,x_2,\dots,x_{d'}],$ monomial, $$\lim_{n \rightarrow \infty } \mathbb E\left( P(\#_1 \Omega(\boldsymbol\sigma_n), \ldots, \#_{d'} \Omega(\boldsymbol\sigma_n))\right) = \lim_{n \rightarrow \infty } \mathbb E\left( P(\#_1 \Omega(\boldsymbol\rho_n), \ldots, \#_{dd'} \Omega(\boldsymbol\rho_n))\right).$$
The convergence  of joint moments is therefore a direct consequence of \eqref{eq:general-comp} and the convergence in distribution follows.
\end{proof}
We now go to the proof of Corollary \ref{corollary_dis}. Assume that $(\sigma_{1,n}, \ldots, \sigma_{k,n})$ satisfies the assumptions of 
Corollary \ref{corollary_dis}. For every $j \le k,$ let $\tau_{j,n}$ be a random permutation, independent of $(\sigma_{1,n}, \ldots, \sigma_{k,n})$, with $\mathrm{Ewens}_n(0)$ distribution (that is the uniform law on the subset of $\mathfrak S_n$ of permutations having exactly one cycle). We now define, for every $j \le k,$
\[ 
\widetilde \sigma_{j,n} := \left\{
\begin{array}{ll}
\sigma_{j,n}, & \textrm{ if } \forall i \in \widehat{\mathcal N}_j(w), \frac{\#_i \sigma_{j,n}}{\sqrt n} \le 1,\\
\tau_{j,n}, & \textrm{ otherwise.}
\end{array}
\right.
\]
Then $(\widetilde\sigma_{1,n}, \ldots, \widetilde\sigma_{k,n})$ satisfies the assumptions of Theorem \ref{th:conv_dis} and 
$(\#_{1} w(\boldsymbol\sigma_{n}),\dots,\#_{d^\prime}  w(\boldsymbol\sigma_{n}))$
and \linebreak$(\#_{1} w(\widetilde{\boldsymbol\sigma}_{n}),\dots,\#_{d^\prime}  w(\widetilde{\boldsymbol\sigma}_{n}))$
 have asymptotically the same distribution.

\section{Discussion about optimality}\label{sec:opt}
%
%
%

In this last section, we make a few remarks on the optimality of our conditions \eqref{hyp:prim}, \eqref{hyp:prim_2}, \eqref{hyp:main} and \eqref{hyp:main_2} on short cycles. We  hereafter only consider the case when  the permutations are  independent and have conjugation invariant distributions. In several cases, in particular the commutator, We can claim that these conditions are sharp. In \cite{KaMa20} we already discussed the case of the product.

\subsection{Optimality for the commutator}

\begin{itemize}
 \item Assumption  \eqref{hyp:prim} is optimal in the sense that if, for some $\ell \ge 1,$ we have 
\[ \liminf_{n\to\infty} n^{-\frac \ell 2}\min(\mathbb{E}((\#_1\,\sigma_n)^\ell),\mathbb{E}((\#_1\,\rho_n)^\ell))=\varepsilon_\ell>0, \]
\[
\textrm{ then } 
         \liminf_{n\to\infty} \mathbb{E}( (\#_1([\sigma_{n},\rho_n])^\ell) \geq \mathbb{E}(\xi_1^\ell)+\varepsilon^2_\ell.\]
Indeed, one can see that if $g$ is the class of the graph  with adjacency matrix
        $ {\rm  I}_\ell,$ the event \linebreak $\{ (\widehat{\mathcal{G}}^{\llbracket \ell \rrbracket,[x_1,x_2]}_1(\sigma_n,\rho_n), \widehat{\mathcal{G}}^{\llbracket \ell \rrbracket,[x_1,x_2]}_2(\sigma_n,\rho_n))=(\hat{g},\hat{g})\}$ will contribute to the limit, leading to the term $\varepsilon^2_\ell$ in the limit.
\item  Similarly, Assumption  \eqref{hyp:prim_2} is optimal in the sense that if 
\[\liminf_{n\to\infty} \left(\frac{\min(\mathbb{E}(\#_2\,\sigma_n),\mathbb{E}(\#_2\,\rho_n))}{n}\right)=\varepsilon^\prime >0,
\textrm{ then }
         \liminf_{n\to\infty} \mathbb{E}\left( \left(\#_1([\sigma_{n},\rho_n])\right)^2\right) \geq 2+{\varepsilon'}^4.\]
Indeed, as above,  if $\hat{g}^\prime$ is the class of the graph with adjacency matrix 
        $\left(\begin{matrix}
0 & 1 & 0 & 0 \\ 
1 & 0 & 0 & 0 \\
0 & 0 & 0 & 1 \\ 
0 & 0 & 1 & 0 
\end{matrix}\right),$ the event $\{ (\widehat{\mathcal{G}}^{\{1,2\},[x_1,x_2]}_1(\sigma_n,\rho_n), \widehat{\mathcal{G}}^{\{1,2\},[x_1,x_2]}_2(\sigma_n,\rho_n))=(\hat{g}^\prime,\hat{g}^\prime)\}$ will contribute to the limit. 
\end{itemize}

\subsection{Non-optimality in the general framework}

On the other hand, one can find words for which  the conditions are not optimal.
We mention hereafter a few examples where, by easy considerations, sometimes using our previous results on the product, one could improve the assumptions on short cycles.

\begin{itemize}
    \item Take for example the case $w=x_1x_2^3$. 
Using our conditions in the case of the product, one could give conditions on fixed points and two cycles of $\sigma_2^3$, that is conditions on cycles of length $1,3,6$ on $\sigma_2$. Our theorem gives conditions on cycles of lengths, $1,2,3,6$ therefore being suboptimal.
\item  More generally, if $w=w_1w_2$ where $w_1$ and $w_2$ have disjoint supports, one can try to apply the product theorem to get weaker conditions. For example, for $w=(x_1x_2)x_3$ asking that the  number of fixed points of $x_1x_2$ are less than $\varepsilon\sqrt{n}$ and the number of  two-cycles of $x_1x_2$ is less that $\varepsilon n$ can be obtained with only conditions on fixed points of $\sigma_{1,n}$ and $\sigma_{2,n}$.
 \item In a similar spirit,
if for some $j \le k,$ $\gcd{ \mathcal N'_j}=d \neq 1$ for some $j$, one can obtain in many cases better conditions by considering   $x_j^d$ as a new subword  $y$ in $w.$ 
\end{itemize}
The table below summarizes some cases where we checked whether our conditions are optimal or not. 
{
\begin{table}[!h] \centering
\begin{tabular}{|l|c|}
\hline
$w$                 & Optimality of our conditions \\ \hline
$x_1x_2$            & Yes                          \\ \hline
$x_1x_2^2$          & Yes                          \\ \hline
$[x_1,x_2]$         & Yes                          \\ \hline
$x_1x_2^3$          & No                           \\ \hline
$x_1x_2x_3$         & No                           \\ \hline
$x_1x_2x_1x_2$      & Yes                          \\ \hline
$x_1^2x_2^{-2}$       & Yes                          \\ \hline
$x_3x_1x_3^{-1}x_2$ & No                           \\ \hline
\end{tabular}
\end{table}}

\end{document}